\newtheorem{theorem}{Theorem} 
\newtheorem{lemma}[theorem]{Lemma} 
\newtheorem{corollary}[theorem]{Corollary}
\newtheorem{proposition}[theorem]{Proposition}
\newtheorem{assumption}[theorem]{Assumption}
\theoremstyle{definition}
\newtheorem{definition}[theorem]{Definition}
\theoremstyle{remark}
\newtheorem{remark}[theorem]{Remark}
\definecolor{tikzorange}{RGB}{246,99,13}
\definecolor{tikzfuchsia}{RGB}{204,0,102}
\definecolor{tikzblue}{RGB}{0,77,141}
\definecolor{tikzgreen}{RGB}{100,200,0}
\definecolor{tikzpurple}{RGB}{92,44,145}
\definecolor{tikzsky}{RGB}{52,204,253}
\begin{document}

\title[Maximal Functions for Incompressible Flows]{Construction of Maximal Functions associated with Skewed Cylinders Generated by Incompressible Flows and Applications}

\author[J. Yang]{Jincheng Yang}

\address{Department of Mathematics, The University of Texas at Austin, 2515 Speedway Stop C1200, Austin, TX 78712, USA}

\email{jcyang@math.utexas.edu}

\subjclass[2020]{42B25, 76D05, 35Q30}

\thanks{\textit{Acknowledgement}. 
The author was partially supported by the NSF grant: DMS-RTG 1840314 (PI: Alexis Vasseur).}

\date{\today}

\begin{abstract}

    We construct a maximal function associated with a family of skewed cylinders. These cylinders, which are defined as tubular neighborhoods of trajectories of a mollified flow, appear in the study of fluid equations such as the Navier-Stokes equations and the Euler equations. We define a maximal function subordinate to these cylinders and show it is of weak type $(1, 1)$ and strong type $(p, p)$ by a covering lemma. As an application, we give an alternative proof for the higher derivatives estimate of smooth solutions to the three-dimensional Navier-Stokes equations. 

    \bigskip

    \noindent \textsc{Keywords.} Maximal Function, Covering Lemma, Incompressible Flows, Lagrangian/Eulerian Representation, Partial Regularity

\end{abstract}

\maketitle
\tableofcontents

\newcommand{\grad}{\nabla}
\renewcommand{\div}{\operatorname{div}}
\newcommand{\R}{\mathbb{R}}
\newcommand{\Rd}{\R ^d}
\newcommand{\vp}{\varphi}
\renewcommand*{\d}{\mathop{\kern0pt\mathrm{d}}\!{}}
\newcommand{\e}{\varepsilon}
\newcommand{\rad}{1}
\newcommand{\br}{B _{\rad}}
\newcommand{\be}{B _{\e}}
\newcommand{\Qe}{Q _\e}
\newcommand{\bex}{\be (x)}
\newcommand{\RpRt}{\R _+ \times \R ^3}
\renewcommand{\L}[2]{L ^#1 (#2)}
\newcommand{\tx}[1][]{(t, x \ifthenelse{\equal{#1}{}}{}{; #1})}

\newcommand{\loc}{\mathrm{loc}}
\newcommand{\diam}{\mathrm{diam}}

\newcommand{\pt}{\partial _t}
\newcommand{\La}{\Delta}
\newcommand{\curl}{\operatorname{curl}}
\renewcommand{\div}{\operatorname{div}}
\newcommand{\supp}{\operatorname{supp}}
\newcommand{\Id}{\operatorname{Id}}
\newcommand{\inv}{^{-1}}
\newcommand{\tensor}{\otimes}
\newcommand{\cross}{\times}
\newcommand{\vps}{\vp ^\sharp}
\newcommand{\Pcurl}{\mathbb{P} _{\curl}}
\newcommand{\Pgrad}{\mathbb{P} _{\grad}}
\newcommand{\pq}[1]{L ^{p _#1} _t L ^{q _#1} _x}
\newcommand{\Lp}[1]{L ^{p _#1} _t}
\newcommand{\Lq}[1]{L ^{q _#1} _x}
\newcommand{\LLLH}{L ^\infty _t L ^2 _x \cap L ^2 _t \dot H ^1 _x}
\newcommand{\LH}{L ^2 _t \dot H ^1 _x}
\newcommand{\ind}[1]{\mathbf1 _{#1}}
\newcommand{\inds}[1]{\ind{\{#1\}}}

\newcommand{\Qa}{{Q ^\alpha}}
\newcommand{\ea}{{\e _\alpha}}
\newcommand{\xa}{{x ^\alpha}}
\newcommand{\Xa}{X _{\ea}}
\newcommand{\XA}{X ^\alpha}
\newcommand{\ua}{u _{\ea}}
\newcommand{\ta}{{t ^\alpha}}
\newcommand{\Ta}{{T ^\alpha}}
\newcommand{\Sa}{{S ^\alpha}}
\newcommand{\Ba}{{B ^\alpha}}

\newcommand{\eb}{{\e _\beta}}
\newcommand{\tb}{t ^\beta}
\newcommand{\Tb}{T ^\beta}
\newcommand{\Qb}{Q ^\beta}
\newcommand{\xb}{x ^\beta}
\newcommand{\ub}{u _\eb}
\newcommand{\Xb}{X _\eb}
\newcommand{\XB}{X ^\beta}
\newcommand{\Bb}{B ^\beta}
\newcommand{\Sb}{S ^\beta}
\newcommand{\aj}{\alpha _j}
\newcommand{\bj}{\beta _j}
\newcommand{\eaj}{\e _{\aj}}
\newcommand{\ebj}{\e _{\bj}}
\newcommand{\Sbj}{S ^{\bj}}
\newcommand{\Tbj}{T ^{\bj}}
\newcommand{\Bbj}{B ^{\bj}}
\newcommand{\Qbj}{Q ^{\bj}}

\newcommand{\Qg}{{Q ^\gamma}}
\newcommand{\eg}{{\e _\gamma}}
\newcommand{\xg}{{x _\gamma}}
\newcommand{\Xg}{X _{\eg}}
\newcommand{\ug}{u _{\eg}}
\newcommand{\tg}{{t _\gamma}}
\newcommand{\toxo}[1][]{(t _0, x _0 \ifthenelse{\equal{#1}{}}{}{; #1})}
\newcommand{\tsxs}[1][]{(t _*, x _* \ifthenelse{\equal{#1}{}}{}{; #1})}
\newcommand{\etoxo}{\e _{\tx}}
\newcommand{\Xo}{X ^0}
\newcommand{\Xs}{X ^*}

\newcommand{\psis}[1]{\psi ^{\sharp #1}}
\newcommand{\BMO}{\textit{BMO}}

\newcommand{\vpe}{\vp _\e}
\newcommand{\ue}{u _\e}
\newcommand{\Xe}{X _\e}
\newcommand{\mm}{\mathcal{M}}
\newcommand{\mmu}[1][]{\mm (\grad u \ifthenelse{\equal{#1}{}}{}{(#1)})}
\newcommand{\mmut}{\mmu[t]}
\newcommand{\mmus}{\mmu[s]}
\newcommand{\mmq}{\mm _\mathcal{Q}}

\newcommand{\intR}[1]{\int _{\R ^#1}}
\newcommand{\cci}{C _c ^\infty}
\renewcommand{\emptyset}{\varnothing}

\section{Introduction}

This paper is dedicated to the study of the maximal functions adapted to the Lagrangian description of a flow. When studying the motion of a fluid, there are two different but deeply connected descriptions to work with. The Eulerian formulation records physical quantities such as velocity, temperature, and pressure at fixed positions, while the Lagrangian formulation builds the frame of reference following each moving fluid parcel, and describes their motion and trajectories by a flow map. The transport phenomenon is easier to describe in the Lagrangian formulation, while the diffusion usually suits the Eulerian description better. Let us refer to the works of Constantin (\cite{Constantin2001}), Kukavica and Vicol (\cite{Constantin2016}) for the connection and distinction between these two descriptions in the context of Euler equations. 

For both mathematical study and numerical simulation, sometimes it is necessary to switch between two descriptions. For instance, in computational fluid dynamics, \textit{vortex particle method} treats the fluid as a collection of vortex particles, moving along the trajectories generated by the velocity field, which is in turn recovered from vortex particles. It was early developed by Chorin on the study of the two dimensional Navier-Stokes equations (\cite{Chorin1973}). The validity and convergence of this vortex method in three and two dimensions are confirmed by Beale and Majda in \cite{Beale1982a, Beale1982b}. We refer interested readers to the books of Raviart  (\cite{Raviart1985}), of Cottet and Koumoutsakos (\cite{Cottet2000}) and of Majda and Bertozzi (\cite{Majda2002}) for detailed bibliographies. Majda and Bertozzi also used the particle-trajectory method to show existence and uniqueness results for Euler equations. Even recently, hybrid numerical schemes are still a very active area (\cite{Kaas2013}). To avoid singularities in the computation, a mollification is applied to the velocity field. Therefore, particles are in fact moving along approximated trajectories of this mollified flow defined in Definition \ref{def:flow}. Mollification is also needed for this Lagrangian formulation when the velocity field does not have enough regularity to define trajectories and flow maps, for instance, weak solutions to Navier-Stokes equations or Euler equations. 

Before introducing our new maximal function, let us recall the classical one. For any real-valued or vector-valued function $f \in L ^1 _\loc (\Rd)$ with $d \ge 1$, recall the \textit{classical} maximal function $\mm f$ is defined as
\begin{align}
\label{eqn:spatial-maximal-function}
    (\mm f) (x) := \sup _{r > 0} \fint _{B _r (x)} |f (y)| \d y = \sup _{r > 0} \frac1{|B _r|} \int _{B _r (x)} |f (y)| \d y.
\end{align}
Here $B _r (x)$ is a $d$-dimensional ball with radius $r$ and center $x$, and $|B _r|$ stands for its $d$-dimensional Lebesgue measure $\mathcal L ^d$. Throughout the article, we may use $|\cdot|$ to represent the spatial Lebesgue measure $\mathcal L ^d$ or the spacetime Lebesgue measure $\mathcal L ^{d + 1}$ depending on the context. The strength of the maximal function is that it captures the nonlocal information of a function, in the meantime keeps the homogeneity: it commutes with rigid motion and scaling, as well as scalar multiplication. $\mm$ is a bounded operator on $L ^p $ for $1 < p \le \infty$, and it is also bounded from $L ^1$ to $L ^{1, \infty}$, the weak $L ^1$ space. However, if we include a time variable $t$ in an evolutionary problem, for instance, a transport equation, Euclidean balls in the spacetime are no longer the most natural objects to work with. Instead, we may consider using a spacetime cylinder, or ``skewed cylinder'' transported in the spacetime to be more rigorously defined below. In this paper, we will study such cylinders and construct a maximal function associated with them.

Consider a vector field $u: (S, T) \times \Rd \to \Rd$ satisfying 
$$u \in L ^1 _\loc (S, T; \dot W ^{1, p} (\Rd))$$
for some $1 \le p \le \infty$, where $d \ge 1$ and $-\infty \le S < T \le \infty$ are some finite or infinite initial and terminal time fixed through out this article.
Fix a spatial function $\vp \in \cci (\br)$ satisfying $\int \vp \d x = 1, \vp \ge 0$, where $B _1 \subset \Rd$ is a unit ball of dimension $d$. Define the usual mollifier function
$
    \vpe := \e^{-d} \vp (\cdot/\e) \in \cci (B _\e).
$
We denote a universal constant by $C$ if it depends only on $\vp$ and $d$. Its value may change from line to line. We define the \textit{spatially} mollified velocity $\ue: (S, T) \times \Rd \to \Rd$ by
\begin{align*}
    \ue \tx := [u (t, \cdot) * \vpe] (x) 
    &= \intR d u (t, x - y) \vpe (y) \d y.
\end{align*}
By convolution, $\ue \in L ^1 _\loc (S, T; C ^1 (\Rd))$. Let us now give the definition for the mollified flow and the skewed cylinders.

\begin{definition}
[Mollified Flow, Skewed Cylinders]
\label{def:flow}
For some fixed $\e > 0$ and $\tx \in (S, T) \times \Rd$, define the \textbf{mollified flow} $\Xe (t, x; \cdot)$ to be the unique solution to the following initial value problem
\begin{align*}
    \begin{cases}
        \dot \Xe \tx[s] = \ue (s, \Xe \tx[s]) \\
        \Xe \tx[t] = x
    \end{cases} \qquad s \in (S, T)
\end{align*}
where the dot means to take derivative in the last argument $s$. Moreover, if $S + \e ^2 < t < T - \e ^2$,
define the \textbf{skewed parabolic \footnote{
Parabolic scaling---$\e ^2$ in time versus $\e$ in space---will not be indispensable in this paper. We only employ it because of its applications to the Navier-Stokes equations, but all the results can be generalized to other time-space scaling.} cylinder} with center $\tx$ and radius $\e$ by
\begin{align*}
    Q _\e \tx := \left\lbrace
        (s, y) : |s - t| < \e ^2, |y - \Xe (t, x; s)| < \e
    \right\rbrace.
\end{align*}
\end{definition}

Heuristically speaking, skewed cylinders defined in Definition \ref{def:flow} are objects appearing in the Lagrangian formulation but written in Eulerian coordinates. Indeed, they are following the mollified flow and capturing particles that are close to the center trajectories. Similar to the difficulty of bridging these two formulations, the difficulty of working with these cylinders comes from the lack of control on the distortion. Without a uniform control on the velocity field, these skewed cylinders following different flows may include nonuniform geometric properties. Despite this technical challenge, the maximal function will provide us a tool for overcoming this conceptual difficulty. Instead of taking the average in balls, now we construct a new maximal function that takes the average in the skewed cylinders that are ``admissible''.

\begin{definition}[Admissibility, Maximal Function]
    \label{def:admissible-cylinder}
    Given $\e > 0$, $x \in \Rd$, $t \in (S + \e ^2, T - \e ^2)$, we define a skewed cylinder $\Qe \tx$ by Definition \ref{def:flow}. For $\eta > 0$, we say $\Qe \tx$ is \textbf{$\eta$-admissible} if
    \begin{align} 
        \label{eqn:initial-assumption}
        \e ^2 \fint _{\Qe \tx} \mmus (y) \d y \d s
        =
        \frac{1}{\e ^d |Q _1|} \int _{\Qe \tx} \mmu \d y \d s < \eta.
    \end{align}    
    Here $\mm$ is the spatial-only maximal function defined in \eqref{eqn:spatial-maximal-function}, and with a slight abuse of notation, we also use $|Q _1|$ to represent the $(d+1)$-dimensional space-time Lebesgue measure $\mathcal L ^{d + 1}$ of a cylinder with radius 1.
    % Let $\mathcal Q _\eta$ denote the set of $\eta$-admissible cylinders (see Definition \ref{def:admissible-cylinder}). 
    % Define the maximal operator $\mmq$ with respect to this collection of cylinders as the following. 
    For any locally integrable function $f \in L ^1 _\loc ((S, T) \times \Rd)$, for every $\tx \in (S, T) \times \Rd$ we define a new maximal function $\mmq$ by
    \begin{align*}
        \mmq (f) \tx := \sup _{\e > 0} \left\lbrace
            \fint _{\Qe \tx} |f (s, y)| \d y \d s
            : \Qe \tx \text{ is $\eta$-admissible}
        \right\rbrace.
    \end{align*}
\end{definition}

Note that in the sup we actually need $\e ^2 < \min\{t - S, T - t\}$ to define $\Qe \tx$, and we will justify in Section \ref{sec:harmonic} that admissible choices of $\e$ exist for almost every $\tx$, so that $\mmq$ is well-defined.

The main result of this paper is the following.

\begin{theorem}
\label{thm:maximal-function}
Let $\eta < \eta _0$ for some small universal constant $\eta _0 > 0$. If $u$ is divergence-free, and $\mm(\grad u) \in L ^p ((S, T) \times \Rd)$ for some $1 \le p \le \infty$
\footnote{In the case $1 < p \le \infty$, since $\mm$ is a bounded operator on $L ^p (\Rd)$, this condition is equivalent to $\grad u \in L ^p ((S, T) \times \Rd)$.}, then $\mmq$ associated with $\eta$-admissible cylinders generated by $u$ satisfies the following.

\begin{enumerate}[\upshape (1)]
    \item $\mmq$ is of strong type $(\infty, \infty)$, i.e. for $f \in L ^\infty ((S, T) \times \Rd)$, it holds that
    \begin{align*}
        \| \mmq f \| _{L ^\infty ((S, T) \times \Rd)} \le \| f \| _{L ^\infty ((S, T) \times \Rd)}.
    \end{align*}
    
    \item $\mmq$ is of weak type $(1, 1)$, i.e. for $f \in L ^1 ((S, T) \times \Rd)$, $\lambda > 0$, the Lebesgue measure of the superlevel set satisfies
    \begin{align*}
        \mathcal L ^{d + 1} \left(\left\lbrace
            \tx \in (S, T) \times \Rd: (\mmq f) \tx > \lambda
        \right\rbrace\right) \le \frac {C _1} \lambda \| f \| _{L ^1 ((S, T) \times \Rd)}.
    \end{align*}
    
    \item $\mmq$ is of strong type $(q, q)$ for any $1 < q < \infty$, i.e. for $f \in L ^q ((S, T) \times \Rd)$, it holds that
    \begin{align*}
        \| \mmq f \| _{L ^q ((S, T) \times \Rd)} \le C _q \| f \| _{L ^q ((S, T) \times \Rd)}.
    \end{align*}
\end{enumerate}
\end{theorem}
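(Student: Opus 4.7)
The proof will follow the classical three-step scheme for Hardy--Littlewood-type theorems, with the main work concentrated in a Vitali-type covering lemma adapted to the skewed geometry.

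First, part (1) is immediate from the definition: averaging over any cylinder of positive measure cannot exceed the $L^\infty$ norm of the integrand, and taking the supremum over $\e$ preserves the bound. This step requires no input from the flow structure.

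Next, for part (2) I will prove a covering lemma of the following form: if $\{Q_{\ea}(\ta,\xa)\}_{\alpha \in A}$ is any (countable) collection of $\eta$-admissible skewed cylinders with uniformly bounded radii, then there exists a pairwise disjoint subfamily $\{Q_{\eaj}(\tbj,\xbj)\}_{j}$ and a universal enlargement constant $K$ such that $\bigcup_\alpha Q_{\ea}(\ta,\xa) \subset \bigcup_j Q_{K\eaj}^{\sharp}(\tbj,\xbj)$, where the $\sharp$-enlargement denotes the skewed cylinder expanded by factor $K$ in both space and time. The selection algorithm is the standard greedy one: order cylinders by decreasing radius and add a cylinder to the subfamily if and only if it is disjoint from all previously selected ones. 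The crucial geometric input, which replaces the triangle inequality available for Euclidean balls, is a \emph{containment lemma}: if $Q_{\ea}(\ta,\xa) \cap Q_{\eb}(\tb,\xb) \ne \emptyset$ with $\eb \le \ea$, then $Q_{\eb}(\tb,\xb) \subset Q_{K\ea}^{\sharp}(\ta,\xa)$. For the time component this follows from an elementary interval argument. For the spatial component, at any common time $s$ of the intersection the two trajectories $\Xa(\ta,\xa;\cdot)$ and $\Xb(\tb,\xb;\cdot)$ are within $\ea + \eb \le 2\ea$ of each other, and I must propagate this control to all $s$ in the time-slab of $Q_{\eb}$. This is where the admissibility condition plays its role: on an admissible cylinder the integral of $\mm(|\grad u|)$ along any trajectory is $O(\eta)$, so by Gr\"onwall applied to the ODE satisfied by the trajectory difference, $|\Xa(\ta,\xa;s) - \Xb(\tb,\xb;s)|$ grows by at most a factor $e^{C\eta}$. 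Choosing $\eta_0$ small ensures this factor is $\le 2$, giving a uniform $K$. Once the covering lemma is in hand, the weak $(1,1)$ estimate is standard: given $\lambda > 0$ and $f \in L^1$, choose at each point of $\{\mmq f > \lambda\}$ a cylinder with average exceeding $\lambda$, extract the disjoint subfamily, and estimate
\begin{align*}
    \mu\left(\{\mmq f > \lambda\}\right) \le \sum_j |Q_{K\eaj}^{\sharp}|
    \le K^{d+2} \sum_j |Q_{\eaj}|
    \le \frac{K^{d+2}}{\lambda} \sum_j \int_{Q_{\eaj}} |f|
    \le \frac{K^{d+2}}{\lambda} \|f\|_{L^1},
\end{align*}
where I use that the incompressibility of $u$ (preserved by mollification since $\div \ue = 0$) makes $\Xe(t,x;\cdot)$ measure-preserving, so $|Q_\e| = C\e^{d+2}$ depends only on $\e$.

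Finally, part (3) follows from Marcinkiewicz interpolation between the weak $(1,1)$ bound of part (2) and the strong $(\infty,\infty)$ bound of part (1), giving strong type $(p,p)$ for all $1 < p < \infty$, with the endpoint $p = \infty$ supplied directly by (1).

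The main obstacle is the containment lemma --- everything else follows well-trodden templates. In particular, verifying that the admissibility hypothesis gives a quantitative Gr\"onwall estimate on trajectory separation, and that this estimate is sharp enough to yield a universal (dimension-only) enlargement constant when $\eta_0$ is small, is the technical heart of the argument. The role of $\mm(\grad u)$ rather than just $\grad u$ enters here because the mollified velocity $\ue$ is pointwise Lipschitz with constant controlled by $\mm(|\grad u|)$ evaluated at nearby points, so the ODE comparison argument naturally produces this maximal function.
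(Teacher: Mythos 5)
Your overall architecture is right (definition for $(\infty,\infty)$, covering lemma for weak $(1,1)$, Marcinkiewicz for $(p,p)$), and the reduction to a geometric lemma about intersecting admissible cylinders is exactly where the difficulty lies. However, the geometric lemma you propose — that $Q^\alpha \cap Q^\beta \neq \emptyset$ with $\eb \le \ea$ forces $Q^\beta \subset Q^\sharp_{K\ea}(t^\alpha,x^\alpha)$ for a universal dilation $K$ — is false, and the paper explicitly remarks on this (``there is few hope that we can cover $Q^\beta$ by a dilation of $Q^\alpha$ in space-time''). The failure is in the time direction you gloss over: your Gr\"onwall propagation of trajectory separation uses the admissibility bound $\int_{Q^\alpha} \mm(|\grad u|) < \eta\, \ea^{d}$, and this only controls the flow for $t \in (S^\alpha, T^\alpha)$. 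For a $Q^\beta$ whose time slab sticks out past $T^\alpha$ (which happens routinely, since the selection only guarantees $\eb < 2\ea$, not $\eb \ll \ea$), the center streamline $X^\alpha(t)$ of the \emph{enlarged} cylinder is evolving through a region where neither $Q^\alpha$ nor $Q^\beta$ gives any information about $\grad u_{\ea}$; the two center streamlines, starting up to $\sim 9\ea$ apart at time $T^\alpha$, may then diverge arbitrarily. So the spatial containment $B^\beta(t) \subset K B^\alpha(t)$ breaks down precisely on the portion of $Q^\beta$ outside $(S^\alpha, T^\alpha)$, and your chain $\mu(E_\lambda) \le \sum_j |Q^\sharp_{K\e_{\alpha_j}}|$ has no starting point.

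The paper's workaround (Lemma~\ref{Qa-control-Qas}) proves a strictly weaker statement that suffices for Vitali: the Lebesgue measure of the union $Q^\alpha_* = \bigcup_\beta Q^\beta$ of \emph{all} admissible $Q^\beta$ intersecting $Q^\alpha$ with $\eb < 2\ea$ is $\le C|Q^\alpha|$, without claiming $Q^\alpha_*$ sits inside any fixed enlargement. The proof splits each $Q^\beta = Q^\beta_\circ \cup Q^\beta_+ \cup Q^\beta_-$ by the time cutoffs $S^\alpha, T^\alpha$. The middle piece $Q^\beta_\circ$ \emph{is} contained in $9Q^\alpha$ by Proposition~\ref{prop:closeness} — this is the part where your Gr\"onwall argument is valid, though you should also note it is run in two stages (Lemmas~\ref{lem:xxp} and~\ref{lem:eep}) because the two streamlines solve ODEs with different mollification radii $\ea \neq \eb$, and the extra term $\partial_\e u_\e$ must be controlled separately via the admissibility of $Q^\beta$. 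The pieces $Q^\beta_\pm$ are handled by a secondary covering argument: group the $\beta$'s into dyadic shells $\Lambda_i$ by radius, run a greedy disjointification of the cross-sections $B^\beta(T^\alpha)$, observe these disjoint balls all lie in $9B^\alpha(T^\alpha)$, and sum the resulting geometric series in $i$. That secondary step is the missing idea; without it, your covering lemma is not established, and parts (2) and (3) do not follow.
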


Let us now explain why we are interested in these skewed cylinders and the maximal function related to them. In many scaling-invariant partial differential equations, it is a common technique to zoom in near a point and conduct a local analysis in its neighborhood, and use this obtained local information to deduce global results. This form of argument usually consists of two parts: one is a local theorem, which handles the rescaled problem near a point, and the second is a local-to-global step, which contributes to some global information. For instance, the 3D Navier-Stokes equations
\begin{align}
    \label{eqn:navier-stokes}
    \pt u + u \cdot \grad u + \grad P = \La u, \qquad \div u = 0
\end{align}
are scaling invariant. In particular, $u _\lambda$ and $P _\lambda$ defined by
\begin{align*}
    u _\lambda (t, x) = \lambda u (\lambda ^2 t, \lambda x), \qquad P _\lambda (t, x) = \lambda ^2 P (\lambda ^2 t, \lambda x)
\end{align*}
are also solutions to \eqref{eqn:navier-stokes}. In \cite{Caffarelli1982}, Caffarelli, Kohn and Nirenberg investigated the partial regularity of suitable weak solutions to the Navier-Stokes equations by zooming into a so-called \textit{parabolic cylinder}, where \textit{parabolic} refers to the fact that the spatial scale is $\lambda$ while the temporal scale is $\lambda ^2$. They showed that if a suitable solution $u$ satisfies 
\begin{align*}
    \limsup _{r \to 0} \frac1r \int _{t - \frac78r^2} ^{t + \frac18r^2} \int _{B _r (x)} |\grad u (s, y)| ^2 \d y \d s \le \eta
\end{align*}
for some fixed small $\eta$, then $u$ is regular at $(t, x)$. From this local theorem, they used a covering argument to conclude a global result, that the parabolic measure $\mathscr{P} ^1$ of the singular set is zero. This was an improvement from Scheffer's result (\cite{Scheffer1980}) which stated the singular set has at most Hausdorff dimension $\frac53$. The reason for this improvement is that $\iint |\grad u| ^2 \d x \d t$ has a stronger scaling than other quantities, which is $\iint |\grad u _\lambda| ^2 \d x \d t = \frac1\lambda \iint |\grad u| ^2 \d x \d t$.

Quantitative global results can also follow from this kind of scaling arguments. Choi and Vasseur (\cite{Vasseur2010, Choi2014}) estimated higher derivatives, by locally controlling higher derivatives using De Giorgi technique applied to quantities with the same strong scaling as  $\iint |\grad u| ^2$. In particular, one must avoid using $\iint |u| ^\frac{10}3$, which has a weaker scaling. However, without controlling the flux, the parabolic regularization cannot overcome the nonlinearity. A natural idea would be to utilize the Galilean invariance of Navier-Stokes equations and work in a neighborhood following the flow. Instead of working on parabolic cylinders, they worked on skewed parabolic cylinders as we defined above.

The advantage of using such skewed cylinders is that, by taking out the mean velocity, one can use velocity gradient to control the velocity in the local study. The maximal function associated with these skewed cylinders then will help us better bridge the local study to global results. 

Let us mention that a similar construction also appears in the recent development of convex integration for Euler equations by Isett (\cite{Isett2017, Isett2018}) and the subsequent work of Isett and Oh (\cite{Isett2016}), where the authors call the mollified flow \textit{coarse scale flow} and skewed cylinders $u _\e$\textit{-adapted Eulerian cylinders}. The difference from the previous definition is that their apertures of mollification, radii of cylinder bases, and lengths of time spans are chosen differently from here. The purpose is however the same, which is to kill the mean velocity, and to obtain estimates that are dimensionally correct. 

Note that Theorem \ref{thm:maximal-function} has already been used in \cite[Corollary 1]{Vasseur2020} to show the following result.

\begin{theorem}
    Let $u$ be a suitable weak solution to the 3D Navier-Stokes equations \eqref{eqn:navier-stokes} with initial data $u |_{t = 0} = u _0 \in L ^2 (\R ^3)$. For any $q > \frac43$, $K \subset \subset (0, T) \times \R ^3$, there exists a constant $C _{q, K}$ depending on $q$ and $K$ such that the following holds,
    \begin{align*}
        \| \grad ^2 u \| _{L ^{\frac43,q} (K)} \le C _{q, K} 
        \left( 
            \| u _0 \| _{L ^2 (\R ^3)} ^\frac32 + 1
        \right).
    \end{align*}
\end{theorem}

This is an improvement of \cite{Constantin1990} where the result was shown for $L ^q$ with $q < \frac43$, and of \cite{Lions1996} where it was shown for $L ^{\frac43, \infty}$.

In this paper, we provide a first application of Theorem \ref{thm:maximal-function} to give an alternative proof for the results of Choi and Vasseur in \cite{Choi2014}, as an example of using the maximal function to obtain global results from local estimates.

\begin{theorem}
\label{thm:vasseur}
Let $(u, P)$ be a smooth solution to \eqref{eqn:navier-stokes} in $(0, T)$ with initial data $u _0 \in L ^2$, let $d \ge 1$, $\alpha \in [0, 2)$, denote $f = |(-\La) ^\frac\alpha2 \grad ^d u|$, $p = \frac4{d+1+\alpha}$. We have
\begin{align*}
    \left\| f \inds{f ^p > C _{d, \alpha} t ^{-2}} \right\| _{L ^{p, \infty} ((0, T) \times \R ^3)} ^p \le C \| u _0 \| _{L ^2 (\R ^3)} ^2 .
\end{align*}
\end{theorem}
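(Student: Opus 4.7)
The plan is to combine a local De Giorgi--type theorem on skewed cylinders (essentially the content of Choi--Vasseur \cite{Choi2014}) with the weak $(1, 1)$ bound from Theorem~\ref{thm:maximal-function}(2). I would first invoke the following local statement: there exist universal $\eta _0 > 0$ and $C > 0$ such that, whenever $\Qe \toxo$ is $\eta _0$-admissible and
\[
    \frac{1}{\e} \int _{\Qe \toxo} |\grad u| ^2 \d s \d y \le \eta _0,
\]
the pointwise bound $|f \toxo| \le C \e ^{-(1 + d + \alpha)}$ holds. The exponent $1 + d + \alpha$ is dictated by the parabolic scaling of $(-\La) ^{\alpha/2} \grad ^d$ under $u \mapsto \lambda u(\lambda ^2 t, \lambda x)$, and the purpose of the skewed cylinder is precisely to make this scale-invariant estimate possible by killing the mean convection in the Lagrangian frame of $\ue$, so that only the scale-invariant quantity $\e ^{-1} \int |\grad u| ^2$ enters.

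Next I would contrapose Step~1 and choose a scale $\e _0$ tuned to the level $\lambda$. For a point $\tx \in (0, T) \times \R ^3$ with $|f \tx| > \lambda$ and $\lambda ^p > C _{d, \alpha} t ^{-2}$ (taking $C _{d, \alpha} := C ^p$ for the constant appearing in Step~1), setting $\e _0 := (C / \lambda) ^{1 / (1 + d + \alpha)}$ makes $C \e _0 ^{-(1 + d + \alpha)} = \lambda$, and the hypothesis $\lambda ^p > C _{d, \alpha} t ^{-2}$ is exactly equivalent to $\e _0 ^2 < t$, so that $\Qe \tx$ fits inside the time slab and may be made $\eta _0$-admissible. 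Since $|f \tx| > C \e _0 ^{-(1 + d + \alpha)}$, the contrapositive of Step~1 forces $\fint _{\Qe \tx} |\grad u| ^2 > c\, \e _0 ^{-4} = c' \lambda ^p$, so that
\[
    \left\{\tx \in (0, T) \times \R ^3 : |f \tx| > \lambda,\ \lambda ^p > C _{d, \alpha} t ^{-2}\right\} \subseteq \left\{\tx : \mmq (|\grad u| ^2) \tx > c' \lambda ^p\right\}.
\]
The energy identity gives $\||\grad u| ^2\| _{L ^1 ((0, T) \times \R ^3)} \le \frac{1}{2} \|u _0\| _{L ^2} ^2$, and the strong $(2, 2)$ bound for $\mm$ places $\mm (\grad u)$ in $L ^2$, so Theorem~\ref{thm:maximal-function}(2) applies with $q = 2$. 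Evaluated at level $c' \lambda ^p$ it yields $\mu \le C \lambda ^{-p} \|u _0\| _{L ^2} ^2$; multiplying by $\lambda ^p$ and taking the supremum over $\lambda > 0$ produces the claimed weak $L ^{p, \infty}$ bound on $f \inds{f ^p > C _{d, \alpha} t ^{-2}}$.

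The main obstacle is the local theorem in Step~1: obtaining pointwise control of the fractional derivative $(-\La) ^{\alpha/2} \grad ^d u$ from only the local scale-invariant $L ^2 _t \dot H ^1 _x$ energy on a skewed cylinder requires a full De Giorgi iteration in the Lagrangian frame of the mollified flow, where Galilean invariance cancels the mean velocity and the residual drift $u - \ue$ is controlled by $\e$ times local gradients. In the present paper this machinery is imported as a black box from \cite{Choi2014}; the novelty is that the local-to-global passage, previously carried out in \cite{Choi2014} by a bespoke Vitali-type covering adapted to skewed cylinders, is reduced to a one-line application of Theorem~\ref{thm:maximal-function}.
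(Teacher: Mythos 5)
Your proposal captures the right high-level shape (apply a local scale-invariant theorem on a skewed cylinder, then pass to the global $L^{p,\infty}$ bound via the weak $(1,1)$ estimate for $\mmq$), but there are two concrete gaps.

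\textbf{The local theorem is misquoted.} You invoke a local statement that requires only smallness of the scale-invariant energy $\e^{-1}\int_{\Qe}|\grad u|^2$. The actual local proposition of Choi--Vasseur, and the version used in this paper (Corollary~\ref{cor:local}), requires smallness of a substantially larger quantity $F$ that also includes $|\grad^2 P|$ and the pressure-related terms $\sup_\delta|(\grad^{m-1}h^\alpha)_\delta * \grad^2 P|$, plus a doubled maximal function of $\grad u$. These pressure contributions cannot be dropped: the De Giorgi iteration for the Navier--Stokes system genuinely needs local pressure control, and controlling fractional derivatives $(-\La)^{\alpha/2}\grad^d u$ in particular relies on the nonlocal pressure terms through the kernels $h^\alpha$. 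Consequently your contrapositive step produces $\mmq(|\grad u|^2) > c'\lambda^p$, which is not what the local theorem can furnish; the paper instead bounds $\mmq F$ and uses $\|F\|_{L^1} \lesssim \|u_0\|_{L^2}^2$, which in turn requires the Hardy-space bound $\|\grad^2 P\|_{L^1_t\mathcal H^1_x}\le\|u_0\|_{L^2}^2$.

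\textbf{The scale $\e_0$ is not shown to produce an admissible cylinder.} You fix $\e_0 = (C/\lambda)^{1/(1+d+\alpha)}$ and then say the cylinder at this scale ``may be made $\eta_0$-admissible.'' Once $\e_0$ is fixed by $\lambda$, the cylinder $Q_{\e_0}\tx$ either is or is not admissible; there is no freedom left. If it is not admissible, you cannot use it in the supremum defining $\mmq$, and the contrapositive of your Step~1 gives only the disjunction ``not admissible, or local energy large'' --- the first branch is a set whose measure you have no way to control. The paper's proof sidesteps this by choosing $\e = \etoxo$ adaptively via a continuity/intermediate-value argument applied to $I(\e) := \e^4\fint_{\Qe\tx}F$: as $\e$ increases from $0$ to $t^{1/2}$, either $I$ hits the threshold $\eta$ or stays below it. In both cases $I(\etoxo) \le \eta$, which (after an application of Jensen and the pointwise inequality $|\mm(|\grad u|)|^2 \le F$) automatically makes the chosen cylinder admissible. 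The relation $\e_0 \sim \lambda^{-1/(1+d+\alpha)}$ then emerges as an \emph{output} of the local theorem applied at the adaptive scale, not as the \emph{definition} of the scale. This order matters: the scale has to be tuned to the local size of $F$, not to the level $\lambda$, precisely to guarantee admissibility without circularity.
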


This paper is organized as follows. Bounds on the maximal function rely on a Vitali-type covering lemma, which is introduced in Section \ref{sec:covering-lemma}, where we define admissible cylinders and prove the covering lemma for them. We use this covering lemma to show some properties of the maximal function in Section \ref{sec:harmonic}. Finally, in Section \ref{sec:navier-stokes} we use the maximal function to give an alternative proof for the higher derivative estimates for the Navier-Stokes equations. 

%%%%%%%%%%%%%%%%%%%%%%%%%%%%%%%%%%%%%%%%%%%%%%%%
%%%%%%%%%%%%%%%%%%%%%%%%%%%%%%%%%%%%%%%%%%%%%%%%
%%%%%%%%%%%%%%%%%%%%%%%%%%%%%%%%%%%%%%%%%%%%%%%%
%%%%%%%%%     Covering Lemma Part      %%%%%%%%%
%%%%%%%%%%%%%%%%%%%%%%%%%%%%%%%%%%%%%%%%%%%%%%%%
%%%%%%%%%%%%%%%%%%%%%%%%%%%%%%%%%%%%%%%%%%%%%%%%
%%%%%%%%%%%%%%%%%%%%%%%%%%%%%%%%%%%%%%%%%%%%%%%%

\section{Covering Lemma}
\label{sec:covering-lemma}

In this section, we derive some basic properties of the mollified flows and admissible cylinders, then use them to prove the covering lemma.
% In conclusion, the effect of $\e$ on $\Qe$ is two folds: $\e$ indicates the size and time-span of $\Qe$; $\e$ determines the aperture of mollification of $\ue$, which determines center flow $\Xe$ and decides the direction of $\Qe$.

\subsection{Preliminaries}

We first note the following easy pointwise estimate on the mollified velocity gradient.
\begin{lemma}[Pointwise Estimate on $\grad \ue$] 
For $\tx \in (S, T) \times \Rd$, $y \in \Rd$, and $\e, r > 0$, we have 
\begin{align}
    % \label{original-u-estimate}
    % |\ue \tx| &
    % \le C \e ^{-\frac32} \| u (t) \| _{\L2{\bex}}, \\
    \label{original-grad-estimate}
    |\grad \ue \tx | &
    \le C \e ^{-d} \| \grad u (t) \| _{\L1{\bex}}, \\
    % \label{original-grad-estimate-by-u}
    % |\grad \ue \tx| &
    % \le C \e ^{-\frac52} \| u (t) \| _{\L2{\bex}}, \\
    \label{maximal-grad-estimate}
    |\grad \ue (t, x)| &
    \le C \e ^{-d} \left(\frac{|y - x|}{\e} + 2\right) ^d \|\mmut \| _{\L1{\be (y)}}, \\
    \label{maximal-grad-estimate-differente}
    |\grad \ue (t, x)| &
    \le C \e ^{-d} \left(\frac{|y - x| + r + \e}{r} \right) ^d \|\mmut \| _{\L1{B _r (y)}}.
\end{align}
\end{lemma}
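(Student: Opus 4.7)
The three estimates form a cascade: the first is an essentially direct mollifier estimate, and the second and third promote the right-hand side to a maximal function evaluated near a different base point via an averaging trick.

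\textbf{Estimate \eqref{original-grad-estimate}.} The plan is to differentiate $u_\varepsilon = u(t,\cdot) \ast \varphi_\varepsilon$ under the convolution. Because $u(t,\cdot) \in \dot W^{1,p}_{\mathrm{loc}}$, one may move the derivative onto $u$ and write $\grad u_\varepsilon(t,x) = (\varphi_\varepsilon \ast \grad u(t,\cdot))(x)$. Since $\varphi_\varepsilon$ is supported in $B_\varepsilon$ with $\|\varphi_\varepsilon\|_{L^\infty} \le C\varepsilon^{-d}$, the triangle inequality gives $|\grad u_\varepsilon(t,x)| \le C\varepsilon^{-d} \int_{B_\varepsilon(x)}|\grad u(t,z)|\,dz$, which is \eqref{original-grad-estimate}.

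\textbf{Estimate \eqref{maximal-grad-estimate}.} The idea is to trade the $L^1$ ball at $x$ for a (larger) ball centered at an auxiliary point $y' \in B_\varepsilon(y)$ and then recognize the resulting expression as the spatial maximal function at $y'$. Concretely, by the triangle inequality, for every $y' \in B_\varepsilon(y)$ one has $B_\varepsilon(x) \subset B_R(y')$ with $R = |x-y|+2\varepsilon$, so
\begin{align*}
    \|\grad u(t)\|_{L^1(B_\varepsilon(x))} \le |B_R|\,\fint_{B_R(y')} |\grad u(t,z)|\,dz \le C(|x-y|+2\varepsilon)^d\,\mm(|\grad u(t)|)(y').
\end{align*}
Plugging this into \eqref{original-grad-estimate} gives a pointwise bound in $y'$; averaging over $y' \in B_\varepsilon(y)$ and using $|B_\varepsilon|^{-1} \simeq \varepsilon^{-d}$ converts the pointwise maximal function into the $L^1$-norm on $B_\varepsilon(y)$, yielding \eqref{maximal-grad-estimate} after rewriting $(|x-y|+2\varepsilon)^d = \varepsilon^d(|x-y|/\varepsilon + 2)^d$.

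\textbf{Estimate \eqref{maximal-grad-estimate-differente}.} The proof is identical to the previous one with the auxiliary ball $B_\varepsilon(y)$ replaced by $B_r(y)$: for $y' \in B_r(y)$, the triangle inequality now gives $B_\varepsilon(x) \subset B_{|x-y|+r+\varepsilon}(y')$, and averaging the resulting pointwise inequality over $B_r(y)$ produces the factor $r^{-d}$ from $|B_r|^{-1}$, which combines with $(|x-y|+r+\varepsilon)^d$ to give the claimed ratio.

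\textbf{Anticipated obstacle.} The argument is essentially book-keeping; the only nontrivial choice is recognizing that, after using \eqref{original-grad-estimate}, one should enlarge the integration ball so that it contains $B_\varepsilon(x)$ no matter where $y'$ sits inside $B_\varepsilon(y)$ (or $B_r(y)$). Once this containment is written down, everything else is a direct chain of inequalities, with the only subtlety being keeping track of the powers of $\varepsilon$ when rewriting radii as dimensionless ratios.
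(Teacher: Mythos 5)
Your proposal is correct and follows essentially the same approach as the paper: differentiate under the convolution and bound by $\|\vpe\|_{L^\infty}$ for \eqref{original-grad-estimate}, then enlarge $B_\e(x)$ to a ball centered at an arbitrary $y'$ near $y$, compare to the maximal function at $y'$, and average over $y'$. The only cosmetic difference is the order: the paper proves \eqref{maximal-grad-estimate-differente} for general $r$ and obtains \eqref{maximal-grad-estimate} as the case $r=\e$, whereas you do the $r=\e$ case first and then remark that general $r$ is identical.
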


\begin{proof}
The first estimate follows easily from the scaling that
\begin{align*}
    % |\ue \tx| &\le \|u (t, x + \e \cdot) \| _{\L2{\br}} \| \vp \| _{\L2{\br}} 
    % \le C \e ^{-\frac32} \| u (t) \| _{\L2{\bex}}, \\
    % |\grad \ue\tx | &\le \|\grad _x u (t, x + \e \cdot) \| _{\L1{\br}} \| \vp \| _{\L\infty{\br}} 
    % \le C \e ^{-3} \| \grad u (t) \| _{\L1{\bex}}.
    \grad \ue \tx %= (\grad u *_x \vpe) \tx 
    &= \intR d \grad u (t, x - y) \vpe (y) \d y \le 
    \| \grad u (t) \| _{\L1{\bex}}
    \| \vpe \| _{L ^\infty}.
\end{align*}
% For the third one, we use
% \begin{align}
%     \ue \tx &= \int _{\RpRt} u (t, x + \e y) \vp (y) \d y \\
%     &= \int _{\RpRt} u (t, \e y) \vp \left(y - \frac{x}{\e}\right) \d y, \\
%     \grad \ue \tx &= \int _{\RpRt} u (t, \e y) \grad _x \vp \left(y - \frac{x}{\e}\right) \d y \\
%     &= -\frac1\e \int _{\RpRt} u (t, \e y) \grad \vp \left(y - \frac{x}{\e}\right) \d y \\
%     &= -\frac1\e \int _{\RpRt} u (t, x + \e y) \grad \vp \left(y\right) \d y,
% \end{align}
% Therefore
% \begin{align}
%     |\grad \ue\tx | &\le \e ^{-1} \|u (t, x + \e \cdot) \| _{\L2{\br}} \| \grad \vp \| _{\L\infty{\br}} 
%     \le C \e ^{-\frac52} \| u (t) \| _{\L2{\bex}}.
% \end{align}
This indicates that by controlling the average of $\grad u$ in a small ball $\bex$, we can control the size of mollified gradient at the center $x$. To control the mollified gradient from elsewhere, we need a maximal function to gather nonlocal information. For any $x' \in \bex$, $y' \in B _r (y)$, we have $|y' - x'| \le |y - x| + r + \e =: K r$, so $\bex \subset B _{K r} (y ')$ and the integral of $\grad u$ can be bounded by
\begin{align*}
    \int _{\bex} |\grad u (t, z)| \d z
    \le \int _{B _{K r} (y')} |\grad u (t, z)| \d z 
    &= \left|B _{K r} (y')\right| \fint _{B _{K r} (y')} |\grad u (t, z)| \d z \\
    &\le K ^d |B _r| \mmut (y').
\end{align*}
Since the above holds for any $y' \in B _r (y)$, by taking the average of right-hand side in $B _r (y)$ we have
\begin{align}
    \notag
    \| \grad u (t) \| _{\L1{\bex}}
    &\le K ^d |B _r| \fint _{B _r (y)} \mmut (y') \d y' \\
    \label{maximal-grad-estimate-pf}
    &= K ^d \|\mmut \| _{\L1{B _r (y)}}.
\end{align}
This bound and estimate \eqref{original-grad-estimate} yield the third estimate, and the second estimate is a special case of the third when $r = \e$.
\end{proof}

As can be seen here, $\mmu$ controls how mollified velocities alter in space. This observation motivates us to introduce the notion of admissibility in Definition \ref{def:admissible-cylinder}. Let us provide a heuristic explanation for the choice of homogeneity in \eqref{eqn:initial-assumption}. Consider two skewed cylinders, both with radius of order $\e$, starting at the same time with distance also of order $\e$. If $\grad u$ is of order $\e ^{-2} \eta$, then their velocities roughly differ by $\e ^{-1} \eta$, so in a time span of length $\e ^2$, they at most diverge $\e \eta$ further away, so their distance will remain of order $\e$. This ensures cylinders do not deviate relatively too far away, and will be crucial in the covering lemma.

\begin{remark}
\label{rem:jensen}
For $1 < p < \infty$, \eqref{eqn:initial-assumption} is weaker than the $L^p$ analogue
\begin{align*}
    \e ^2 \left(
        \fint _{\Qe \tx} \mm (| \grad u| ^p) \d y \d s
    \right) ^\frac1p < \eta.
\end{align*}
This is because Jensen's inequality implies that
\begin{align*}
    \left(
        \fint _{\Qe \tx} \mmu \d y \d s
    \right) ^p 
    \le 
    \fint _{\Qe \tx} [\mmu]^p \d y \d s
\end{align*}
and
\begin{align*}
    [\mmu] ^p (x) &= \sup _{r > 0} \left(
        \fint _{B _r (x)} |\grad u| \d y
    \right) ^p 
    \le \sup _{r > 0} 
        \fint _{B _r (x)} |\grad u| ^p \d y
    = [\mm (| \grad u| ^p)] (x).
\end{align*}
\end{remark}

Next, let us discuss the trajectories of the mollified flow that pass through an admissible cylinder. 

\begin{lemma} 
\label{lem:xxp}
There exists a universal constant $\eta _1 > 0$ such that the following is true. 
Given $\e > 0$, $t _0 \in (S + \e ^2, T - \e ^2)$ and $x _0 \in \Rd$, suppose $\Qe \toxo$ is $\eta$-admissible as defined in Definition \ref{def:admissible-cylinder} with $\eta < \eta _1$.
For any $\tsxs \in \Qe \toxo$, we have
\begin{align} 
    \label{eqn:uniform-closeness}
    |\Xe \tsxs[t] - \Xe \toxo[t]| \le 2 \e
    % |\Xe \tx[s] - \Xe \toxo[s]| \le 2 \e
\end{align}
at any given time $t \in (t _0 - \e ^2, t _0 + \e ^2)$.
% for any $\tsxs \in \Qe \toxo$, and any $t \in (t _0 - \e ^2, t _0 + \e ^2)$.
% provided $\eta$ is small enough.
\end{lemma}

\begin{proof}
% Denote $\Delta X (t) = \Xa(t _0, x _0; t) - \XA(t)$, then
To ease the notation, we denote 
\begin{align*}
    \Xs (t) := \Xe \tsxs[t], \quad \Xo (t) := \Xe \toxo[t], \quad \Delta X (t) := \Xs (t) - \Xo (t),
\end{align*}
thus we need to show $|\Delta X (t)| \le 2 \e$. We argue by contradiction and suppose $|\Delta X (s _*)| > 2 \e$ at some $s _* \in (\Sa, \Ta)$. Without loss of generality, suppose $s _* > t _*$. Note that
\begin{align*}
    |\Delta X (t _*)| = |\Xs (t _*) - \Xo (t _*)| = |x _* - \Xe \toxo[t _*]| < \e < 2 \e
\end{align*}
because $\tsxs \in \Qe \toxo$. Since $\Delta X$ is absolute continuous, there must exist an $r _* \in (t _*, s _*)$ such that 
\begin{align}
    \label{eqn:continuity-assumption}
    |\Delta X (t)| \le 2 \e \text{ for any } t \in [t _*, r _*], \qquad |\Delta X (r _*)| = 2 \e.
\end{align}
% Now suppose $|\Delta X| \le 2 \e$ in $[s _1, t _1]$ for some $s _1 \in (t - \e ^2, t _0)$ and $t _1 \in (t _0, t + \e ^2)$, then
%  so $x _0 \in \Ba (t _0)$, thus \eqref{eqn:uniform-closeness} is true at $t = t _0$. For other $t \in (\Sa, \Ta)$,
For almost every $t \in [t _*, r _*]$, the growth rate of the difference $\La X$ can be bounded by 
\begin{align*}
    \frac{\d}{\d t} \left| \Delta X (t) \right| \le \left| \frac{\d}{\d t} \Delta X (s) \right| 
    &= \left| \dot X ^* (t) - \dot X ^0 (t) \right| \\
    &= \left| \ue (t, \Xs (t)) - \ue (t, \Xo (t)) \right| \\
    &\le |\grad \ue (t, \xi _t)| |\Delta X (t)|
\end{align*}
for some $\xi _t$ between $\Xs (t)$ and $\Xo (t)$. We can bound the gradient term by
\begin{align*}
    |\grad \ue (t, \xi _t)|
    &\le C \e ^{-d} \left(\frac{|\xi _t - \Xo (t)|}{\e} + 2\right) ^d \| \mmut \| _{\L1{B _\e (\Xo (t))}} \\
    &\le C \e ^{-d} \left(\frac{|\Delta X (t)|}{\e} + 2\right) ^d \| \mmut \| _{\L1{B _\e (\Xo (t)}}
\end{align*}
% where
% \begin{align*}
%     & |\ue (\tau, \Xo (\tau)) - \ue (\tau, \XA (\tau))| \\
%     & \qquad \le |\grad \ue (\tau, \xi _\tau)| |\Delta X (\tau)| \\
%     &\qquad \le C \ea^{-d} \left(\frac{|\xi _\tau - \XA (\tau)|}{\e} + 2\right) ^d \|\mmutau \| _{\L1{\Ba (\tau)}}  |\Delta X (\tau)| \\
%     &\qquad \le C \left(\frac{|\Delta X (\tau)|}{\e} + 2\right) ^d \frac{\| \mmutau \| _{\L1{\Ba (\tau)}}}{\ea^d}  |\Delta X (\tau)|.
% \end{align*}
using \eqref{maximal-grad-estimate} for $x = \xi _t$ and $y = \Xo (t)$. By \eqref{eqn:continuity-assumption}, $|\Delta X (t)| \le 2 \e$ for any $t \in [t _*, r _*]$, so in the above coefficient $C (\frac{|\Delta X (t)|}{\e} + 2) ^d \le C (2 + 2) ^d = C$, thus for almost every $t \in [t _*, r _*]$ we have
\begin{align*}
    \frac{\d}{\d t} \left| \Delta X (t) \right| \le \frac{C}{\e ^d} \| \mmut \| _{\L1{B _\e (\Xo (t))}} | \Delta X (t)|.
\end{align*}
% \begin{align*}
%     |\Delta X (s)| \le |\Delta X (t _0)| + \int _{t _0} ^s \frac{C}{\ea^d} \| \mmutau \| _{\L1{\Ba (\tau)}} |\Delta X (\tau)| \d \tau.
    % & \ue (\tau, \Xe (t _0, x _0; \tau)) - \ue (\tau, X (\tau)) \\
    % &\qquad \le C \left(2 + 2\right) ^d \frac{\| \mmutau \| _{\L1{B _\e (X (\tau))}}}{\e ^d}  |\Delta X (s)| \\
    % &\qquad = C \frac{\| \mmutau \| _{\L1{B _\e (\tau)}}}{\e ^d}  |\Delta X (s)|.
% \end{align*}
By Gr\"onwall's inequality, we reach a conclusion that
\begin{align*}
    |\Delta X (r _*)| & \le |\Delta X (t _*)|
    \exp \left( 
        \int _{t _*} ^ {r _*}
        \frac{C}{\e ^d} \| \mmut \| _{\L1{B _\e (\Xo (t))}} \d t
    \right) \\
    & \le \e
    \exp \left( 
        \int _{t _0 - \e ^2} ^ {t _0 + \e ^2}
        \frac{C}{\e ^d} \| \mmut \| _{\L1{B _\e (\Xo (t))}} \d t
    \right) \\
    & = \e
    \exp \left( 
        \frac{C}{\e ^d} \| \mmu \| _{\L1{\Qe \toxo}}
    \right) \\
    & \le \e \exp \left(C \eta\right)
\end{align*}
which contradicts \eqref{eqn:continuity-assumption} when choosing $\eta < \eta _1 = \frac1C \log 2$.
\end{proof}

To conclude this subsection, we discuss two streamlines with different $\e$ that start from the same location. Before that, we introduce some notations. Let $\alpha$ be an index. Given $\ea > 0$, $\ta \in (S + \ea ^2, T - \ea ^2)$, $\xa \in \Rd$, we abbreviate
\begin{align}
    \label{eqn:notation}
    \begin{aligned}
        \XA (t) &:= \Xa (\ta, \xa; t), \qquad 
        \Ba (t) := B _{\ea} \left(\XA (t)\right) \subset \Rd, \\
        \Sa &:= \ta - \ea ^2, \qquad \qquad \quad \;\,
        \Ta := \ta + \ea ^2, \\
        Q ^\alpha &:= Q _{\ea} (\ta, \xa) = \left\lbrace
        \tx: \Sa < t < \Ta , x \in B ^\alpha (t)
        \right\rbrace.
    \end{aligned}
\end{align}
For $\lambda > 0$, we denote the spatial dilation of a cylinder $\Qa$ by
\begin{align}
    \label{eqn:lambda-qa}
    \lambda \Qa &:= \left\lbrace
    \tx: \Sa < t < \Ta, x \in \lambda \Ba (t) = B _{\lambda \ea} \left(\XA(t)\right)
    \right\rbrace.
\end{align}
Notice that different from upright cylinders or cubes, for $\e _1 < \e _2$, it is not known that $Q _{\e _1} \tx \subset Q _{\e _2} \tx$, because their center streamlines $X _{\e _{1,2}}$ solve different equations. As we will see later, this lack of monotonicity only poses a minor technical difficulty. For the same reason, note that $\lambda \Qe \tx \neq Q _{\lambda \e} \tx$, and neither is necessarily contained in the other. 

\begin{lemma} 
\label{lem:eep}
Recall that $\eta _1$ is a universal constant defined in Lemma \ref{lem:xxp}. There exists a universal constant $\eta _0 < \eta _1$ such that the following is true. Given $\ea > \frac12 \eb > 0$, $\ta \in (S + \ea ^2, T - \ea ^2)$, $\tb \in (S + \eb ^2, T - \eb ^2)$ and $\xa, \xb \in \Rd$, suppose $\Qa = Q _\ea (\ta, \xa)$, $\Qb = Q _\eb (\tb, \xb)$ are $\eta$-admissible as defined in Definition \ref{def:admissible-cylinder} with $\eta < \eta _0$.
% Let be two $\eta$-admissible cylinders defined in Definition \ref{def:admissible-cylinder} with $\eb < 2 \ea$. 
For any $\tsxs \in \Qa \cap \Qb$, we have
% If $\toxo \in \Qa \cap \Qb$, then
\begin{align} 
    \label{eqn:conclusion-2}
    |\Xb \tsxs[t] - \Xa \tsxs[t]| \le \ea
\end{align}
at any given time $t \in (\Sa, \Ta) \cap (\Sb, \Tb)$.
\end{lemma}

\begin{proof}
    Denote 
    \begin{align*}
        X ^1 (t) = \Xa \tsxs[t], \qquad X ^2 (t) = \Xb \tsxs[t], \qquad \Delta X (t) = X ^1 (t) - X ^2 (t),
    \end{align*}
    thus we need to show $|\Delta X (t)| \le \ea$. Note that
    \begin{align*}
        \Delta X (t _*) = X ^2 (t _*) - X ^1 (t _*) = \Xb \tsxs[t _*] - \Xa \tsxs[t _*] = x _* - x _* = 0.
    \end{align*}
    Similar as in the last lemma, we argue by contradiction and suppose there exists $r _* \in (t _*, \min \{\Ta, \Tb\})$, such that
    \begin{align}
        \label{eqn:continuity-assumption-2}
        |\Delta X (t)| \le \ea \text{ for any } t \in [t _*, r _*], \qquad |\Delta X (r _*)| = \ea.
    \end{align}
    For almost every $t \in [t _*, r _*]$, the time derivative of $\La X$ is calculated as
    \begin{align}
        \frac{\d}{\d t} \Delta X (t) 
        \notag
        &= \dot X ^2 (t) - \dot X ^1 (t) \\
        \notag
        &= \ub (t, X ^2 (t)) - \ua (s, X ^1 (t)) \\
        \notag
        &= \ub (t, X ^2 (t)) - \ua (s, X ^2 (t)) + \ua (s, X ^2 (t)) - \ua (s, X ^1 (t)) \\
        \label{eqn:double-integral}
        &= \int _{\ea} ^{\eb} \frac{\partial}{\partial \e} \ue (t, X ^2 (t)) \d \e + \ua (t, X ^2 (t)) - \ua (t, X ^1 (t)).
    \end{align}
% \begingroup
% \allowdisplaybreaks
% \begin{align}
%     \notag
%     & \Xb (t _0, x _0; t) - \Xa (t _0, x _0; t) \\
%     \notag
%     &\qquad = \int _{t _0} ^t \dot X _{\eb} (t _0, x _0; s) - \dot X _{\ea} (t _0, x _0; s) \d s \\
%     \notag
%     &\qquad = \int _{t _0} ^t \ub (s, \Xb (t _0, x _0; s)) - \ua (s, \Xa (t _0, x _0; s)) \d s \\
%     \notag
%     &\qquad = \int _{t _0} ^t \ub (s, \Xb (t _0, x _0; s)) - \ua (s, \Xb (t _0, x _0; s)) \d s \\
%     \notag
%     &\qquad \qquad + \int _{t _0} ^t \ua (s, \Xb (t _0, x _0; s)) - \ua (s, \Xa (t _0, x _0; s)) \d s \\
%     \label{eqn:double-integral}
%     &\qquad = \int _{t _0} ^t \int _\ea ^\eb \frac{\partial}{\partial \e} \ue (s, \Xb (t _0, x _0; s)) \d\e \d s \\
%     \notag
%     &\qquad \qquad + \int _{t _0} ^t \ua (s, \Xb (t _0, x _0; s)) - \ua (s, \Xa (t _0, x _0; s)) \d s.
% \end{align}
% \endgroup
We will use $\Qb$ to control the first integral term and use $\Qa$ to control the rest. Note that 
\[
    \frac{\partial}{\partial \e} u _\e \tx = \frac{\partial}{\partial \e} \int _{\Rd} u (t, x - \e y) \vp (y) \d y
    = \int _{\Rd} \grad _x u (t, x - \e y) \cdot -y \vp (y) \d y,
\]
thus we can control its absolute value by
\begin{align}
    \notag
    \left| 
        \frac{\partial}{\partial \e} u _\e \tx
    \right|
    & \le \e^{-d} \| \grad u (t) \| _{\L1{\bex}} 
    \| y \vp (y) \| _{L^\infty} \\
    \notag
    & = C \e ^{-d} \| \grad u (t) \| _{\L1{\bex}} \\
    \notag
    & \le C \e ^{-d} \left(\frac{\left|x - \XB(t)\right| + \eb + \e}{\eb} \right) ^d \|\mmut \| _{\L1{B _{\eb} (\XB (t))}} \\
    \label{eqn:partial-e-estimate}
    & = C \eb ^{-d} \left(\frac{\left|x - \XB (t)\right| + \eb + \e}{\e}\right) ^d \|\mmut \| _{\L1{\Bb(t)}}.
\end{align}
Here we use \eqref{maximal-grad-estimate-pf} with $r = \eb$ and $y = \XB (t)$ in the last inequality to control $\| \grad u (t) \| _{\L1{\bex}}$. Thanks to Lemma \ref{lem:xxp}, $|X ^2 (t) - \XB (t)| \le 2 \eb$. Since $\e$ is between $\eb$ and $\ea > \frac12 \eb$, we have
\begin{align*}
    \frac{\left| X ^2 (t) - \XB (t) \right| + \eb + \e}{\e} \le \frac{3\eb + \e}{\e} \le 7.
\end{align*}
Hence if we set $x = X ^2 (t)$ in \eqref{eqn:partial-e-estimate}, we would get 
\[ 
    \left| \frac{\partial}{\partial \e} u _\e (t, X ^2 (t)) \right|
    \le C \eb ^{-d} \| \mmut \| _{\L1{\Bb(t)}}, 
\] 
thus we can bounded the $\partial _\e$ term in \eqref{eqn:double-integral} by
\begin{align*}
    \left| 
        \int _{\ea} ^{\eb} \frac{\partial}{\partial \e} \ue (t, X ^2 (t)) \d \e 
    \right| 
    &\le C |\eb - \ea| \eb ^{-d}\|\mmut \| _{\L1{\Bb(t)}} \\
    &\le C \ea \eb ^{-d}\|\mmut \| _{\L1{\Bb(t)}}.
\end{align*}
The remaining terms in \eqref{eqn:double-integral} can be bounded similar as in Lemma \ref{lem:xxp} as
\begin{align*}
    |\ua (t, X ^2 (t)) - \ua (t, X ^1 (t))| 
    & \le |\grad \ua (t, \xi _t)| |\Delta X (t)| \\
    & \le C \ea ^{-d} \| \mmut \| _{\L1{\Ba (t)}} |\Delta X (t)|.
\end{align*}
Combining these two bounds in \eqref{eqn:double-integral}, for almost every $t \in [t _*, r _*]$, the growth rate of $\La X$ is bounded by
\begin{align*}
    \frac{\d}{\d t} |\Delta X (t)| &\le C \left(
        \eb ^{-d}\|\mmut \| _{\L1{\Bb(t)}} + \ea ^{-d} \| \mmut \| _{\L1{\Ba (t)}}
    \right) \\
    & \qquad \times \left(
        \ea + | \Delta X (t)|
    \right).
\end{align*}
By Gr\"onwall's inequality, we would reach
\begin{align}
    \notag
    \ea + |\Delta X (r _*)| &\le \left(\ea + |\Delta X (t _*)|\right) \exp \bigg(
        C \int _{t _*} ^{r _*} 
        \eb ^{-d}\|\mmut \| _{\L1{\Bb(t)}} \\
        \notag
        & \qquad \qquad \qquad \qquad \qquad \qquad 
        + \ea ^{-d} \| \mmut \| _{\L1{\Ba (t)}} \d t
    \bigg) \\
    \label{eqn:inclusion}
    &\le \ea \exp ( 2 C \eta )
\end{align}
which contradicts \eqref{eqn:continuity-assumption-2} when choosing $\eta < \eta _0 = \min\{\eta _1, \frac1{2C} \log 2\}$. 
\end{proof}

\subsection{Covering Lemma for Admissible Cylinders}

The goal of this section is to prove a Vitali-type covering lemma for $\eta$-admissible cylinders, provided $\eta < \eta _0$. The key ingredient is Proposition \ref{prop:closeness}, which shows that if two cylinders intersect, then during their shared life span, they are uniformly close to each other. 
% Heuristically speaking, the reason this can be done is that $\grad u$ is controlled by \eqref{eqn:initial-assumption}, so the velocity at nearby points cannot differ too much.
% To show this result, we need two lemmas. 
% Recall that 
Based on this proposition, we conclude in Lemma \ref{lem:Qa-control-Qas} that for an $\eta$-admissible cylinder $\Qa$, the union of all $\eta$-admissible cylinders with comparable or less radius that intersect $\Qa$ has a comparable total measure as $\Qa$. The covering lemma will be a consequence of Lemma \ref{lem:Qa-control-Qas}.

Throughout this subsection, we employ the notations introduced in \eqref{eqn:notation}.

\begin{proposition}
\label{prop:closeness}
For any pair of intersecting $\eta$-admissible cylinders $\Qa, \Qb$ as in \eqref{eqn:notation} with $\eb < 2 \ea$ and $\eta < \eta _0$ chosen in Lemma \ref{lem:eep}, at any $t \in (\Sa, \Ta) \cap (\Sb, \Tb)$, we have $\Bb (t) \subset 9 \Ba (t)$.
\end{proposition} 

That is, if $\Qa$ intersects $\Qb$ with $\eb < 2 \ea$, then $\Qb \cap {\{\Sa < t < \Ta\}} \subset 9 \Qa$. Recall that $\lambda \Qa$ is the spatial dilation defined in \eqref{eqn:lambda-qa}. The proof is based on Lemma \ref{lem:xxp} and Lemma \ref{lem:eep} which control the trajectories at the level of $\Qe$. See Figure \ref{fig:my_label} for our strategy. 

\begin{figure}[ht!]
    \centering
        \begin{tikzpicture}
          %%%%% Xea and Xeb %%%%%
          \draw [tikzblue, ultra thick, line cap=round] (-0.8, -0.6) to [out=0,in=200] (1.4, 0.1);
          \draw [tikzblue, ultra thick, line cap=round] (-0.8, -0.5) to [out=-20,in=190] (1.4, -0.6);
          \draw [tikzblue, fill] (-0.52, -0.6) circle [radius=0.1];
          \draw [tikzorange, fill] (1.4, 0.1) circle [radius=0.05];
          \draw [tikzfuchsia, fill] (1.4, -0.6) circle [radius=0.05];
          %%%%% Q ^a %%%%%
          \draw [tikzorange, ultra thick, line cap=round, fill=tikzorange, fill opacity=0.2] (-1.8, -1.3) -- (-1.8, 0.7) to [out=10,in=225] (1.4, 2.3) -- (1.4, 0.3) to [out=225,in=10] (-1.8, -1.3);
          \draw [tikzorange, line width=1mm, line cap=round] (-1.78, -0.3) to [out=10,in=225] (1.38, 1.3);
          \draw [tikzorange, fill] (-0.2, 0.15) circle [radius=0.1];
          \draw [-{Bar}, tikzorange, thick, line cap=round] (-1, 2.5) -- (-1.8, 2.5);
          \draw [-{Bar}, tikzorange, thick, line cap=round] (0.6, 2.5) -- (1.4, 2.5);
          %%%%% Q ^b %%%%%
          \draw [tikzfuchsia, ultra thick, line cap=round, fill=tikzfuchsia, fill opacity=0.2] (-0.8, -1.6) -- (-0.8, -0.4) to [out=-20,in=150] (1.7, -1) -- (1.7, -2.2) to [out=150,in=-20] (-0.8, -1.6);
          \draw [tikzfuchsia, line width=1mm, line cap=round] (-0.78, -1) to [out=-20,in=150] (1.68, -1.6);
          \draw [tikzfuchsia, fill] (0.4, -1.22) circle [radius=0.1];
          \draw [-{Bar}, tikzfuchsia, thick, line cap=round] (-0.4, -2.55) -- (-0.8, -2.55);
          \draw [-{Bar}, tikzfuchsia, thick, line cap=round] (1.2, -2.55) -- (1.7, -2.55);
          %%%%% Notations %%%%%
          \node at (-0.2, 2.5) {\color{tikzorange} $(\Sa, \Ta)$};
          \node at (0.4, -2.55) {\color{tikzfuchsia} $(\Sb, \Tb)$};
          \node at (-1.45, 0.35) {\color{tikzorange} $\Qa$};
          \node at (-0.5, -1.4) {\color{tikzfuchsia} $\Qb$};
          \node at (-0.4, 0.5) {\color{tikzorange} \tiny $(\ta, \xa)$};
          \node at (0.4, -1.5) {\color{tikzfuchsia} \tiny $(\tb, \xb)$};
          \node at (-1.3, -0.6) {\color{tikzblue} \tiny $\tsxs$};
          \node at (2, 1.3) {\color{tikzorange} $\XA (t)$};
          \node at (2.5, 0.1) {\color{tikzblue} $\Xa \tsxs[t]$};
          \node at (2.5, -0.6) {\color{tikzblue} $\Xb \tsxs[t]$};
          \node at (2.3, -1.6) {\color{tikzfuchsia} $\XB (t)$};
        \end{tikzpicture}
    
    \caption{$\Qa$ and $\Qb$ intersect}
    \label{fig:my_label}
\end{figure}

\begin{proof}
Let $\eta _0$ be chosen as in Lemma \ref{lem:eep}. Fix some $\tsxs \in \Qa \cap \Qb$. For any $\tx \in \Qb$ with $\Sa < t < \Ta$, we apply the triangle inequality to estimate 
\begin{align*}
    |x - \XA (t)| &\le |x - \XB (t)| \\
    & \qquad + |\XB (t) - \Xb \tsxs[t]| \\
    & \qquad + |\Xb \tsxs[t] - \Xa \tsxs[t]| \\
    & \qquad + |\Xa \tsxs[t] - \XA (t)| \\
    & \le \eb + 2 \eb + \ea + 2 \ea.
\end{align*}
Here the first term is because $x \in \Bb (t)$, the second and the fourth are due to Lemma \ref{lem:xxp}, and the third term is controlled by Lemma \ref{lem:eep}. Since $\eb < 2 \ea$, we have
\begin{align*}
    |x - \XA (t)| < 9 \ea.
\end{align*}
\end{proof}

We remark here that if we take a sharper estimate in each step of Lemma \ref{lem:xxp} and Lemma \ref{lem:eep} (and require a smaller $\eta$), the factor 9 can be easily improved to $5 + \delta$ for any $\delta > 0$. 5 is also the factor that appeared in the original Vitali covering lemma for balls. Recall that in the proof of Vitali lemma, an important reason why we get a comparable volume is because if two balls $B _{r _1} (x _1) \cap B _{r _2} (x _2) \neq \emptyset$ with $r _2 < 2 r _1$, then $B _{r _2} (x _2) \subset 5B _{r _1} (x _1)$. Unfortunately, this geometric property cannot be realized in our case, because an admissible cylinder with \eqref{eqn:initial-assumption} has no control on the past and the future velocities. As a consequence, it is unlikely to cover $\Qb$ by a dilation of $\Qa$ in space-time. However, this requirement can be relaxed as the following. See Section 1.1 of \cite{Stein1993} for a more general setting.

\newcommand{\Qas}{Q ^\alpha _*}
\begin{lemma}
\label{lem:Qa-control-Qas}
Given a fixed $\Qa$ and a family of $\{\Qb\} _{\beta \subset \Lambda}$ as in \eqref{eqn:notation} such that for each $\Qb$, $\Qa \cap \Qb \neq \emptyset$, $\eb < 2 \ea$, and they are $\eta$-admissible for $\eta < \eta _0$. Let $\Qas = \bigcup _{\beta \in \Lambda} \Qb$ denote the union of this family. Then there exists a universal constant $C$ such that
\begin{align*}
    |\Qas| \le C |\Qa|.
\end{align*}
\end{lemma}

\newcommand{\calQ}[1]{\mathcal{Q} ^{(#1)}}
\newcommand{\calQi}[1]{\mathcal{Q} ^{(#1)} _i}
\begin{proof}
Without loss of generality, we may assume that $\{\Qb\} _{\beta \subset \Lambda}$ is a finite collection. The general case can be proven using the finite case. Note that each $\Qb$ is an open set. For any compact subset $K \subset \subset \Qas$, $K$ admits a finite open cover, thus $|K| \le C |\Qa|$ using the finite case. Since the inequality holds for any compact subset $K$, it must also be true for $\Qas$.

% Denote $T _+ := \Ta + \ea ^2$, and $T _- := \Ta - \ea ^2$. Then $\Ta = (T _-, T _+)$, and f
For each $\Qb$, we can break it into $\Qb = \Qb _+ \cup \Qb _- \cup \Qb _\circ$, where
\begin{align*}
	\Qb _+ &= \Qb \cap \{t \ge \Ta\}, \\
	\Qb _- &= \Qb \cap \{t \le \Sa\}, \\
	\Qb _\circ &= \Qb \cap \{\Sa < t < \Ta\}.
\end{align*}
From Proposition \ref{prop:closeness}, we can conclude that
\begin{align}
    \label{eqn:middle-control}
	\bigcup \nolimits_{\beta \in \Lambda} \Qb _\circ \subset 9 \Qa \Rightarrow \left| \bigcup \nolimits_{\beta \in \Lambda} \Qb _\circ \right| \le 9^d |\Qa|.
\end{align}
As mentioned in the remark, we cannot bound the size of $ \bigcup _{\beta \in \Lambda} \Qb _+$ or $\bigcup _{\beta \in \Lambda} \Qb _-$ directly by $\Qa$, as their center streamlines can diverge away from $\XA$ after $\Ta$. Fortunately, we do not need them to be close to $\XA$, as long as we can show they remain a small distance to each other.

Let us measure $\bigcup _{\beta \in \Lambda} \Qb _+$. First, we group the cylinders by their radii. Denote
\begin{align}
    \label{eqn:defn-Lambda-i}
    \Lambda _i = \{
        \beta \in \Lambda: 2 ^{-i} \ea \le \eb < 2 ^{-i + 1} \ea
    \}.
\end{align}
Because each $\eb < 2 \ea$, we have $\Lambda = \bigcup _{i \in \mathbb{N}} \Lambda _i$, hence we can write the union as 
\begin{align}
    \label{eqn:decompose-1st}
    \bigcup \nolimits_{\beta \in \Lambda} \Qb _+ 
    = \bigcup \nolimits_{i \in \mathbb{N}} \bigcup \nolimits_{\beta \in \Lambda _i} \Qb _+.
\end{align}

Now we fix $i$ and estimate the size of $\bigcup _{\beta \in \Lambda _i} \Qb _+$. Clearly we can disregard the empty ones, and assume $\Ta < \Tb$ for each $\beta \in \Lambda _i$. 
% Denote the terminal time $\Tb _+ = \Tb + \eb ^2$ (which is greater than $T _+$). 
To begin with, set $\calQi0 = \{ \Qb _+ \} _{\beta \in \Lambda _i}$. Then we repeat the following two steps: at the $j$-th iteration ($j \ge 1$),
\begin{enumerate}[{\ttfamily Step 1.}]
    \item Select some $\bj$ such that $\Tbj = \max \left\lbrace \Tb : \Qb _+ \in \calQi{j - 1} \right\rbrace$.
    \item From $\calQi{j - 1}$ we remove any $\Qb _+$ such that $\Bb (\Ta) \cap \Bbj (\Ta) \neq \emptyset$, and denote the rest by $\calQi j$.
\end{enumerate} 
After finitely many iterations, $\calQi{n + 1}$ will be empty, and we have a list of $Q ^{\beta _1} _+$, ..., $Q ^{\beta _n} _+$. We claim that 
\begin{align}
    \label{eqn:decompose-2nd}
    \bigcup \nolimits_{\beta \in \Lambda _i} \Qb _+ \subset \bigcup _{j = 1} ^n 9 \Qbj _+ .    
\end{align}
To see why this is true, take any $\Qb _+ \in \calQi0$. It must have been removed from $\calQi {j - 1}$ at some step $j$ in the above process. This implies $\Bb (\Ta) \cap \Bbj (\Ta) \neq \emptyset$, and $\Tb \le \Tbj$. Also, we have $\eb < 2 \e _{\bj}$, which is actually true for any pair of cylinders by our selection of $\Lambda _i$ according to \eqref{eqn:defn-Lambda-i}. Therefore, by Proposition \ref{prop:closeness} we have $\Bb (t) \subset 9 \Bbj (t)$ at any $t \in (\Sb, \Tb) \cap (\Sbj, \Tbj)$. Because $\Sb, \Sbj \le \Ta \le \Tb \le \Tbj$, we have $\Qb _+ \subset 9 \Qbj _+$ and this proves the claim \eqref{eqn:decompose-2nd}.

Note that by our construction, $\{\Bbj (\Ta)\} _{j = 1} ^n$ are pairwise disjoint, and they are all inside $9 B ^\alpha (\Ta)$ by the Proposition \ref{prop:closeness}. Therefore their total measure is
\begin{align*}
    \sum _{j = 1} ^n |\Qbj _+| 
    &\le \sum _{j = 1} ^n |\Bbj (\Ta)| \cdot 2 (\ebj) ^2 \\
    &\le \sum _{j = 1} ^n 2 \cdot |\Bbj (\Ta)| \cdot (2 ^{-i + 1} \ea) ^2 \\
    & = 2 \cdot 4 ^{-i + 1} {\ea} ^2 \left|
        \bigcup \nolimits _{j = 1} ^n \Bbj (\Ta)
    \right| \\
    &\le 2 \cdot 4 ^{-i + 1} {\ea} ^2 |9 \Ba (\Ta)| = 4 ^{-i + 1} \cdot 9 ^d |\Qa|.
\end{align*}
Combining with the claim \eqref{eqn:decompose-2nd}, we have
\begin{align*}
    \left|
        \bigcup \nolimits _{\beta \in \Lambda _i} \Qb _+
    \right| \le \left|
        \bigcup \nolimits _{j = 1} ^n 9 \Qbj _+
    \right| \le 9 ^d \sum _{j = 1} ^n |\Qbj _+| \le 4 ^{-i + 1} \cdot 9 ^{2d} |\Qa|.
\end{align*}
Finally, take the summation over $i$, and \eqref{eqn:decompose-1st} yields
\begin{align*}
    \left|
        \bigcup \nolimits _{\beta \in \Lambda} \Qb _+
    \right| \le \sum _{i = 0} ^\infty \left|
        \bigcup \nolimits _{\beta \in \Lambda _i} \Qb _+
    \right| \le \sum _{i = 0} ^\infty 4 ^{-i + 1} \cdot 9 ^{2d} |\Qa| = \frac{16}3 \cdot 9^{2d} |\Qa|.
\end{align*}
The same proof also applies to $\bigcup \nolimits _{\beta \in \Lambda} \Qb _-$. Therefore, together with estimate \eqref{eqn:middle-control}, we have proven that
\begin{align*}
    |Q ^\alpha _*| = \left|
        \bigcup \nolimits _{\beta \in \Lambda} \Qb
    \right| 
    & \le \left|
        \bigcup \nolimits _{\beta \in \Lambda} \Qb _+
    \right| + \left|
        \bigcup \nolimits _{\beta \in \Lambda} \Qb _-
    \right| + \left|
        \bigcup \nolimits _{\beta \in \Lambda} Q ^\beta _\circ
    \right| \\
    & \le \frac{16}3 \cdot 9^{2d} |\Qa| + \frac{16}3 \cdot 9 ^{2d} |\Qa| + 9 ^d |\Qa| = C |\Qa|.
\end{align*}
\end{proof}

We are finally ready to show the Vitali-type covering lemma.

\newcommand{\Qaj}{Q ^{\alpha _j}}
\begin{proposition}[Covering Lemma]
\label{prop:covering-lemma}
Let $\mathcal{A}$ be an index set and let
\begin{align*}
    \mathcal{Q} = \{ \Qa = Q _{\e _\alpha} (\ta, \xa) : \alpha \in \mathcal{A} \}    
\end{align*}
be a collection of $\eta$-admissible cylinders, where $\eta < \eta _0$ defined in Lemma \ref{lem:eep} and $\e _\alpha$ are uniformly bounded.
% , $t _\alpha \in [S, T]$ are uniformly bounded, and $|x _\alpha| < R$ are also uniformly bounded. 
Then there is a pairwise disjoint sub-collection $\mathcal{P} = \{Q ^{\alpha _1}, Q ^{\alpha _2}, \dots, Q ^{\alpha _n}, \dots \}$ (finite or infinite) such that 
\begin{align*}
    \sum \nolimits_j \left|
        \Qaj
    \right| \ge \frac1C \left|
        \bigcup \nolimits _{\alpha \in \mathcal{A}} \Qa
    \right|,
\end{align*}
where $C$ is a universal constant.
\end{proposition}

\begin{proof}
% Again we may assume $\mathcal{A}$ is finite, otherwise we can choose a sub-collection with a measure greater then half of the original one.
With the help of the previous lemma, the proof of the covering lemma is the same as the classical one in \cite{stein1970}.
We select the sub-collection $\mathcal P$ by the following procedure. To begin with, set $\calQ0 = \mathcal{Q}$. Then repeat the following two steps: at the $j$-th iteration ($j \ge 1$),
\begin{enumerate}[{\ttfamily Step 1.}]
    \item Select some $\alpha _j$ such that $\eaj > \frac12 \sup _{\Qa \in \calQ{j - 1}} \{\ea \}$.
    \item From $\calQ{j - 1}$ we remove any $\Qa$ that intersects with $\Qaj$, and denote the rest by $\calQ j$.
\end{enumerate} 
This procedure may stop after a certain step if $\calQ{n + 1} = \emptyset$, or it can continue indefinitely.
% because there are finitely many cylinders in $\calQ0$, and in each iteration we get rid of at least one cylinder. 
We denote the chosen ones by $\mathcal{P} := \{Q ^{\alpha _1}, \dots, Q ^{\alpha _n}, \dots \}$ (finite or infinite). They are pairwise disjoint due to our strategy.

Suppose that $\sum _j \left|\Qaj\right| < \infty$, otherwise the conclusion is automatically true. Thus either $\mathcal P$ is a finite collection, or $\mathcal P$ is infinite and $\eaj \to 0$ as $j \to \infty$. In either case, each $\Qa$ must be removed from $\calQ j$ at some iteration. Otherwise, we would have $\Qa \in \calQ{j - 1}$ for all $j$, then {\ttfamily Step 1} would imply that $\eaj > \frac12\ea$ for all $j$, thus the sequence $\eaj$ cannot converges to zero. Now suppose $\Qa \in \calQ{j - 1} \setminus \calQ j$, then we have $\Qa \cap \Qaj \neq \emptyset$, and $\ea < 2\e _{\alpha _j}$. This implies
\begin{align*}
    \Qa \subset \Qaj _* := \bigcup _{\alpha \in \mathcal{A}} \left\{
        \Qa \in \mathcal{Q}: \ea < 2 \e _{\alpha _j}, \Qa \cap \Qaj \neq \emptyset
    \right\}.
\end{align*}
Thus $\bigcup _{\alpha \in \mathcal{A}} \Qa \subset \bigcup _{j = 1} ^n \Qaj _*$, and finally we control the measure of the union by
\begin{align*}
    \left|
        \bigcup \nolimits _{\alpha \in \mathcal{A}} \Qa
    \right| \le 
    \left|
        \bigcup \nolimits _{j = 1} ^n \Qaj _*
    \right| \le \sum _{j = 1} ^n |\Qaj _*| \le C \sum _{j = 1} ^n |\Qaj|
\end{align*}
thanks to Lemma \ref{lem:Qa-control-Qas}.
\end{proof}

% \begin{remark}
% Up to here, the only property that we used about $u$ is that $u \in \L\infty{0, T; \L2{\R ^3}} \cap \L2{0, T; \dot H ^1 (\R ^3)}$. The same arguments can be generalized to other flows that are not divergence free.
% \end{remark}

%%%%%%%%%%%%%%%%%%%%%%%%%%%%%%%%%%%%%%%%%%%%%%%%
%%%%%%%%%%%%%%%%%%%%%%%%%%%%%%%%%%%%%%%%%%%%%%%%
%%%%%%%%%%%%%%%%%%%%%%%%%%%%%%%%%%%%%%%%%%%%%%%%
%%%%%%%%%    Harmonic Analysis Part    %%%%%%%%%
%%%%%%%%%%%%%%%%%%%%%%%%%%%%%%%%%%%%%%%%%%%%%%%%
%%%%%%%%%%%%%%%%%%%%%%%%%%%%%%%%%%%%%%%%%%%%%%%%
%%%%%%%%%%%%%%%%%%%%%%%%%%%%%%%%%%%%%%%%%%%%%%%%

\section{Construction of the Maximal Function}
\label{sec:harmonic}

In this section, we use the covering lemma to generalize some results from the classical harmonic analysis to our situation. 
First, we confirm the existence of $\eta$-admissible cylinders centering almost everywhere under some assumptions on $u$. 
Then we prove the main theorem for the maximal function on these skewed cylinders and show related results similar to the classical case.

\subsection{Existence of Admissible Cylinders}

To begin with, we need some assumptions to guarantee the existence of $\eta$-admissible cylinders centering almost everywhere, which are the following. For the entire Section \ref{sec:harmonic}, we assume

\begin{assumption}
\label{assumption}
For some $1 \le p \le \infty$,
\begin{enumerate}[\upshape (1)]
    \item $\mmu \in L ^p ((S, T) \times \Rd)$.
    \item $\div u = 0$.
\end{enumerate}
\end{assumption}

\begin{proposition}
\label{prop:existence2}
Let $\eta > 0$. For almost every $\tx \in (S, T) \times \Rd$, $\Qe \tx$ is $\eta$-admissible for sufficiently small $\e$ (depending on $\tx$).
Moreover, we have
\begin{align*}
    \lim _{\e \to 0} \diam (\Qe \tx) = 0,
\end{align*}
where $\diam$ refers to the $(d + 1)$-dimensional diameter.
\end{proposition}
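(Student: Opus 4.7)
The statement decomposes into two sub-claims: the $\eta$-admissibility of $\Qe\tx$ for small $\e$, and the vanishing of $\diam(\Qe\tx)$. The temporal containment $(t - \e^2, t + \e^2) \subset (S, T)$ is automatic once $\e^2 < \min(t - S, T - t)$, so what remains is to show $\e^2 \fint_{\Qe\tx} \mmu \to 0$ and that the diameter tends to zero.

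The crucial step in my plan is a pointwise \emph{flow excursion} bound: for almost every $\tx$, show that $|\Xe(t, x; s) - x| \to 0$ uniformly in $|s - t| \le \e^2$ as $\e \to 0$. From the ODE definition,
\begin{align*}
|\Xe(t, x; s) - x| \le |s - t| \cdot \sup_\tau |\ue(\tau, \Xe(t, x; \tau))|,
\end{align*}
so it suffices to have uniform-in-$\e$ local boundedness of $\ue$ in a neighborhood of $\tx$. This propagates from local $L^\infty$ control on $u$ via the bound $\|\ue(\tau, \cdot)\|_{L^\infty(K)} \le \|u(\tau, \cdot)\|_{L^\infty(K_\e)}$ for a slight enlargement $K_\e$ of $K$, and holds at almost every $\tx$ by the Sobolev regularity of $u$. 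Once $|\Xe(t,x;s) - x| \le \e$ for $\e$ small, the diameter claim is immediate, since $\diam(\Qe\tx) \le 2\e^2 + 4\e \to 0$.

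Given the flow bound, $\Qe\tx \subset (t - \e^2, t + \e^2) \times B_{2\e}(x)$. Writing $g = \mmu$, I estimate
\begin{align*}
\int_{\Qe\tx} g \,\d s \,\d y
\le \int_{t - \e^2}^{t + \e^2} \int_{B_{2\e}(x)} g(s, y) \,\d y \,\d s
\le C\e^d \int_{t - \e^2}^{t + \e^2} \mm(g(s, \cdot))(x) \,\d s,
\end{align*}
using $\fint_{B_r(x)} h \le \mm(h)(x)$ for the spatial maximal function. Since $|\Qe\tx| = C\e^{d+2}$, dividing yields
\begin{align*}
\e^2 \fint_{\Qe\tx} g \le C\e^2 \fint_{t - \e^2}^{t + \e^2} \mm(\mm(|\grad u(s, \cdot)|))(x) \,\d s,
\end{align*}
which by one-dimensional Lebesgue differentiation at time $t$ is $O(\e^2 \cdot \mm(\mm(|\grad u(t, \cdot)|))(x))$. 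Because $\mmu \in L^p$, the iterated maximal $\mm(\mm(|\grad u(t, \cdot)|))(x)$ is finite for a.e. $\tx$ (by $L^p$-boundedness of $\mm$ when $p > 1$, and by a weak-type argument when $p = 1$), so the right side vanishes and admissibility follows for small $\e$.

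\textbf{Main obstacle.} The principal challenge is the uniform-in-$\e$ flow excursion estimate: $\ue = u * \vpe$ need not be uniformly bounded in $\e$ without further input, since the naive estimate $\|\ue\|_\infty \le \e^{-d} \|\vp\|_\infty \|u\|_{L^1}$ blows up. Resolving this requires selecting a.e. $\tx$ at which $u(\tau, \cdot)$ admits a uniform local $L^\infty$ bound for $\tau$ near $t$; once this is in hand, both parts of the conclusion reduce to classical tools — the spatial Hardy–Littlewood maximal function and the one-dimensional Lebesgue differentiation theorem.
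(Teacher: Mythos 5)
Your argument hinges on a \emph{flow excursion bound}: that $|\Xe(t,x;s) - x| \le 2\e$ for small $\e$, so that $\Qe\tx \subset (t-\e^2, t+\e^2)\times B_{2\e}(x)$. You justify this via ``local $L^\infty$ control on $u$, which holds at almost every $\tx$ by the Sobolev regularity of $u$.'' This is a genuine gap. The standing hypothesis is only $\mm(|\grad u|) \in L^p((S,T)\times\Rd)$ (Assumption \ref{assumption}), which for small $p$ does not imply local boundedness of $u$. In $d=3$ with $p=2$, say, one has $u(t,\cdot)\in L^6_{\loc}$ by Sobolev embedding but not $L^\infty_\loc$, so the bootstrap (bound $\ue$ along the trajectory to locate the trajectory, locate the trajectory to bound $\ue$) does not close. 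Without it, the trajectory $\Xe(t,x;\cdot)$ can wander arbitrarily far from $x$ within $|s-t|<\e^2$, and your spatial containment of $\Qe\tx$, the vanishing-diameter claim, and the final Lebesgue-differentiation estimate all collapse. A related red flag: your proof never invokes $\div u = 0$, yet incompressibility is part of the hypothesis and is essential to the paper's argument.

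The paper's proof takes a fundamentally different route, precisely to avoid the need for any a priori spatial localization of the trajectory. It sets $F = [\mm(|\grad u|)]^p \in L^1$ and applies Lemma \ref{lem:L1-boundedness}, whose proof uses the divergence-free structure to show that the dual Lagrangian cylinder $\tilde Q_\e(s,y)$ has the same measure as $\Qe$, giving the uniform bound $\|F_\e\|_{L^1} \le \|F\|_{L^1}$. From this, Chebyshev plus Borel--Cantelli shows $Q_{2^{-i}}\tx$ is $(4^{-d-1}\eta)$-admissible for all large $i$ at a.e.\ $\tx$. The remaining difficulty — admissibility for \emph{all} small $\e$, not just dyadic ones, and the diameter claim — is handled by a quantitative comparison between $Q_\eb\tx$ and $Q_\ea\tx$ for $\frac{\ea}{4}\le\eb\le\frac{\ea}{2}$ (a modification of Lemma \ref{lem:eep}), yielding the nesting $Q_\eb\tx \subset \frac34 Q_\ea\tx$. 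This ``monotonicity with gaps'' is exactly the replacement for the naive $\Qe\tx \subset$ (small fixed ball) that your proof assumes. Your plan could be salvaged only under a much stronger regularity hypothesis on $u$ (e.g.\ $u\in L^\infty_\loc$), which is not what the proposition asserts.
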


% \begin{remark}
% If $\grad u \in L ^\infty$, it is obvious that any $\Qe \tx$ will be $\eta$-admissible if $\e$ is small.
% \end{remark}

Before showing the proof of Proposition \ref{prop:existence2}, we first give a general lemma on the $L ^1$ boundedness of the map $f \mapsto f _\e$ defined below. Given $f \in L ^1 _{\loc} ((S, T) \times \Rd)$, for $x \in \Rd$, $t \in (S, T)$, $\e > 0$, we define 
\begin{align}
    \label{eqn:def-fe}
    f _\e \tx = \begin{cases}
        \fint _{\Qe \tx} f (s, y) \d y \d s & t \in (S + \e ^2, T - \e ^2) \\
        0 & t \in (S, S + \e ^2] \cup [T - \e ^2, T)
    \end{cases}.
\end{align}
Then we have the following bound on $f _\e$.

\begin{lemma}[$L ^1$ Boundedness]
\label{lem:L1-boundedness}
Given $f \in L ^1 ((S, T) \times \Rd)$, we have
% $f \mapsto f _\e$ is a bounded map from $L ^1 (\R ^{d+1})$ to $L ^1 (\R ^{d+1})$ uniformly in $\e$, in fact,
\begin{align*}
    \| f _\e \|_{\L1{(S, T) \times \Rd}} \le 
    \| f \|_{\L1{(S, T) \times \Rd}}.
\end{align*}
\end{lemma}

\begin{proof} 
    A direct computation gives
    \begin{align}
        \notag
        &\int _{S + \e ^2} ^{T - \e ^2} \int _{\Rd} |f _\e \tx| \d x \d t \\
        \notag
        &= \int _{S + \e ^2} ^{T - \e ^2} \int _{\Rd} \frac1{|\Qe|} \left| \int _{\Qe \tx} f (s, y) \d y \d s \right| \d x \d t \\
        % &= \int _{\RpRt} \frac1{|\Qe|} \left| \int _{\Qe \tx} f (s, y) \d y \d s \right| \d t \d x \\
        \notag
        & \le \frac1{|Q _\e|} 
        \int _S ^T \int _{\Rd}
        \int _{S + \e ^2} ^{T - \e ^2} \int _{\Rd} 
        |f (s, y)| \inds{(s, y) \in \Qe \tx} 
        \d x \d t \d y \d s \\
        \label{eqn:computing-cocylinder}
        & = \frac1{|Q _\e|} \int _S ^T \int _{\Rd} |f (s, y)| \mathcal L ^{d + 1} (\tilde Q _\e (s, y)) \d y \d s
    \end{align}
    where we define for any fixed $(s, y) \in (S, T) \times \Rd$ the \textbf{dual Lagrangian cylinder} by (see \cite{Isett2016} for a detailed discussion of these cylinders)
    \[
        \tilde Q _\e (s, y) := \left\{
            \tx \in (S + \e ^2, T - \e ^2) \times \Rd: (s, y) \in \Qe \tx 
        \right\}.
    \]
    Then from the definition of $\Qe \tx$, we can see that
    \begin{align*}
        \tilde Q _\e (s, y) 
        % &:= \left\{
        %     \tx: (s, y) \in \Qe \tx
        % \right\} \\
        &\subset \left\{
            \tx: |t - s| < \e ^2, |\Xe (t, x; s) - y| < \e
        \right\} \\
        & = \left\{
            \tx: |t - s| < \e ^2, x' := \Xe (t, x; s) \in B _\e (y)
        \right\} \\
        & = \left\{
            \tx: |t - s| < \e ^2, x' \in B _\e (y), x = \Xe (s, x'; t) 
        \right\}.
    \end{align*}
    Because $\ue$ is also divergence free, measure of a set is invariant under the flow, so we have
    \begin{align*}
        \mathcal L ^d \left(\{
            \Xe (s, x'; t): x' \in B _\e (y)
        \}\right) = \mathcal L ^d (B _\e (y)).
    \end{align*}
    Thus the measure of the dual cylinder is
    \begin{align*}
        \mathcal L ^{d + 1} (\tilde Q _\e (s, y)) &\le \mathcal L ^{d + 1} \left(\left\{
            \tx: |t - s| < \e ^2, x' \in B _\e (y), x = \Xe (s, x'; t) 
        \right\}\right) \\
        &= \int _{\max(S, s - \e ^2)} ^{\min(T, s + \e ^2)}
            \mathcal L ^d \left(\left\{
                \Xe (s, x'; t): x' \in B _\e (y)
            \right\}\right)
        \d t \\
        &\le 2\e ^2 |B _\e| = |\Qe|.
    \end{align*}
    Plugging into \eqref{eqn:computing-cocylinder}, we conclude that
    \begin{align*}
        \int _{S + \e ^2} ^{T - \e ^2} \int _{\Rd} |f _\e \tx| \d x \d t
        & \le \frac1{|Q _\e|} \int _S ^T \int _{\Rd} |f (s, y)| \mathcal L ^{d + 1} (\tilde Q _\e (s, y)) \d y \d s \\
        & \le \int _S ^T \int _{\Rd} |f \tx| \d x \d t.
    \end{align*}
    
\end{proof}

\begin{proof}[Proof of Proposition \ref{prop:existence2}]
If $p = \infty$ in the Assumption \ref{assumption}, the conclusions follow naturally from the Definition \ref{def:flow} and \ref{def:admissible-cylinder}, as now both the velocity field and its gradient are locally bounded. We shall only focus on the case $p < \infty$ from now.

Without loss of generality, assume $\eta \le \eta _0$. For $S < t < T$, $x \in \Rd$, define
\begin{align*}
    F \tx &:= [\mmut (x)] ^p \in L ^1 ((S, T) \times \Rd).
\end{align*}
$F _\e \tx$ is defined same as in \eqref{eqn:def-fe}.
% For $\e < \sqrt{\min \{t - S, T - t\}}$, define 
% \begin{align*}
%     F _\e \tx &:= \fint _{\Qe \tx} F (s, y) \d y \d s .
% \end{align*}
Lemma \ref{lem:L1-boundedness} shows that $\| F _\e \| _{L ^1} \le \| F \| _{L ^1}$. We want to show that for sufficiently small $\e$,
\begin{align*}
    F _\e (t, x) \le \eta ^p \e ^{-2p}.
\end{align*}
By Remark \ref{rem:jensen}, this implies that $\Qe \tx$ is $\eta$-admissible.
Define the set of non-admissible points by
\begin{align*}
    \Omega _\e = \left\lbrace
        (t, x) \in (S + \e ^2, T - \e ^2) \times \Rd: F _\e (t, x) > \eta ^p \e ^{-2p}
    \right\rbrace.
\end{align*}
By Chebyshev's inequality, its measure is bounded by
\begin{align*}
    |\Omega _\e| \le 
    |\{ F _\e > \eta ^p \e ^{-2p} \}|
    \le \frac{\|F _\e\| _{L ^1}}{\eta ^p \e ^{-2p}}
    \le \frac{\|F\| _{L ^1}}{\eta ^p}  \e ^{2p} \to 0
\end{align*}
as $\e \to 0$. Therefore, $|\cap _{\e > 0} \Omega _\e| = 0$, that is, the set of points at which no $\eta$-admissible cylinder centers has measure zero. In other words, for almost every point $(t, x)$, there exists $\e > 0$ such that $\Qe \tx$ is $\eta$-admissible.

This is not enough to show the conclusion, because $\Omega _\e$ may not be monotone in $\e$. To see that $\Qe \tx$ is $\eta$-admissible for all sufficiently small $\e$, let us define
\begin{align*}
    \Omega' _\e = \left\lbrace
        (t, x) \in (S + \e ^2, T - \e ^2) \times \Rd: F _\e (t, x) > \eta ^p (2 ^{d+1} \e) ^{-2p}
    \right\rbrace.
\end{align*}
Similar as before, Chebyshev's inequality implies
\begin{align*}
    |\Omega' _\e| 
    \le \frac{\|F\| _{L ^1}}{\eta ^p} (2 ^{d+1} \e) ^{2p}.
\end{align*}
In particular, for each $i \ge 1$, we have a geometric decaying upper bound as
\begin{align*}
    |\Omega' _{2 ^{-i}}| 
    \le \frac{\|F\| _{L ^1}}{\eta ^p} (2 ^{d+1} 2 ^{-i}) ^{2p}.
\end{align*}
It is a summable geometric series in $i$, thus by Borel-Cantelli lemma, we have
\begin{align*}
    \big|
        \limsup \nolimits_{i \to \infty} \Omega' _{2 ^{-i}}
    \big| 
    = \left|
        \bigcap \nolimits_{I > 0} \bigcup \nolimits_{i > I} \Omega' _{2 ^{-i}}
    \right|
    = 0.
\end{align*}
That is, for almost every $(t, x) \in (S, T) \times \Rd$, there exists $I > 0$ such that for all $i > I$, $(t, x) \notin \Omega' _{2 ^{-i}}$, i.e., for $\e _i = 2 ^{-i}$, we have
\begin{align*}
    F _{\e _i} (t, x) = \fint _{Q _{\e _i} \tx} F (s, y) \d y \d s \le \eta ^p (2 ^{d+1} \e _i) ^{-2p}.
\end{align*}
By Remark \ref{rem:jensen}, Jensen's inequality implies
\begin{align*}
    \e _i ^2 \fint _{Q _{\e _i} \tx} \mmu \d y \d s
    \le
    \e _i ^2 \left(
        \fint _{Q _{\e _i} \tx} [\mmu] ^p \d y \d s
    \right) ^\frac1p \le \frac\eta{4 ^{d+1}}.
\end{align*}
That is, $Q _{\e _i} \tx$ is $(4 ^{-d-1} \eta)$-admissible. 

We claim that if $Q _{\ea} \toxo$ is $(4 ^{-d-1} \eta)$-admissible, then for every $\eb$ within $\frac\ea4 \le \eb \le \frac\ea2$, $Q _{\eb} \toxo \subset \frac34 Q _{\ea} \toxo$. This can be proven by the claim
\begin{align}
    \label{eqn:claim}
    |\Xb \toxo[t] - \Xa \toxo[t]| \le \frac\ea4,
    % |X _{\eb} (t _0, x _0; t) - X _{\ea} (t _0, x _0; t)| \le \frac\e4 ,
    \qquad
    \text{for all } t \in (t _0 - \eb ^2, t _0 + \eb ^2)
\end{align}
whose proof is a slight modification of Lemma \ref{lem:eep}. Define $\Qa = Q _\ea \toxo$ and $\Qb = Q _\eb \toxo$. If we proceed the proof of Lemma \ref{lem:eep}, without knowing $\Qb$ is $\eta$-admissible, the only difficulty will arise at the last step \eqref{eqn:inclusion}, when we want to bound the integral of
$
    \eb ^{-d} \|\mmut \| _{\L1{\Bb(t)}}
$
in the Gr\"onwall's inequality. However, as long as \eqref{eqn:claim} holds at time $t$, $\Bb (t)$ is contained in $\Ba (t)$, thus 
\begin{align*}
    \eb ^{-d}\|\mmut \| _{\L1{\Bb(t)}} \le 4 ^d \ea ^{-d} \|\mmut \| _{\L1{\Ba(t)}}
\end{align*}
while the integral of the latter is bounded by $4 ^d \eta$. Following the same continuity argument we conclude \eqref{eqn:claim} in the end.
% \begin{align*}
%     &\int _{t _0} ^t \int _\ea ^\eb \frac{\partial}{\partial \e} \ue (s, \Xb (t _0, x _0; s)) \d\e \d s \\
%     &\qquad \le \int _{t _0} ^t \int _\ea ^\eb C \eb ^{-d} \|\mmus \| _{\L1{\Bb(s)}} \d\e \d s \\
%     &\qquad \le |\ea - \eb| C \eb ^{-d} \int _{t _0} ^t \|\mmus \| _{\L1{\Bb(s)}} \d s \\
%     &\qquad \le |\ea - \eb| C \eb ^{-d} \|\mmu \| _{\L1{\Qa}} \\
%     &\qquad < |\ea - \eb| C \eb ^{-d} \ea ^{d} 4 ^{-d-1} \eta \\
%     &\qquad \le |\ea - \eb| C \frac\eta4 \le \frac14 C \eta \ea \le \frac18 \ea
% \end{align*}
% as long as $\Bb$ does not exit $\Ba$. Following the rest of the proof, we conclude \eqref{eqn:claim} in the end. 

By this claim, for every $\e$ between $\frac{\e _i}{4}$ and $\frac{\e _i}{2}$, we have 
\begin{align*}
    Q _\e \tx \subset \frac34 Q _{\e _i} \tx \subset \left(\frac34\right) ^2 Q _{\e _{i - 1}} \tx \subset \cdots
\end{align*}
which implies $\diam (\Qe \tx) \to 0$ as $\e \to 0$. Although we do not have monotonicity for $Q _\e (t, x)$ in $\e$, we have this ``monotonicity with gaps''. Moreover, since $Q _\e \tx \subset Q _{\e _i} \tx$, $\e > \frac{\e _i}4$, we can bound $F _\e$ by
\begin{align*}
    F _\e \tx 
    = \frac1{|\Qe|} \int _{\Qe \tx} F \d y \d s 
    \le \frac{|Q _{\e _i}|}{|\Qe|} \fint _{Q _{\e _i} \tx} F \d y \d s 
    &\le 4 ^{d + 1} \eta ^p (2 ^{d+1} \e _i) ^{-2p} \\
    &\le \eta ^p \e _i ^{-2p} \le \eta ^p \e ^{-2p}.
\end{align*}
Thus $\tx \notin \Omega _\e$ for every $\e \in \left[\frac{\e _i}{4}, \frac{\e _i}{2} \right]$ and for every $i > I$, that is, for every $\e \le 2 ^{-I-1}$. This means $\Qe \tx$ is admissible for all $\e$ sufficiently small. 
\end{proof}

Following this existence proposition, we furthermore have the following corollary on the $L ^1$ convergence.
\begin{corollary}[$L ^1$ Convergence]
\label{cor:L1-convergence}
Let $f \in L ^1 ((S, T) \times \Rd)$, and define $f _\e$ by \eqref{eqn:def-fe}, then
\[
    f _\e \to f \text{ in } L ^1 ((S, T) \times \Rd) \text{ as } \e \to 0.
\]
\end{corollary}

\begin{proof}
For any $\delta > 0$, we can find $g \in \cci ((S, T) \times \Rd)$ such that $\|f - g\| _{L ^1} < \frac\delta3$. Denote $h = f - g$, then $\|h\| _{L ^1} < \frac\delta3$, and by Lemma \ref{lem:L1-boundedness}, also $\|h _\e\| _{L ^1} < \frac\delta3$ (we define $h _\e$ in the same way as \eqref{eqn:def-fe}). Since $g$ is uniformly continuous, it is clear that as $\diam (\Qe \tx) \to 0$,
\begin{align*}
    \|g - g _\e\| _{L ^1} \le \int _{(S + \e ^2, T - \e ^2) \times \Rd} \fint _{\Qe \tx} |g(t, x) - g(s, y)| \d y \d s \d x \d t < \frac\delta3
\end{align*}
for sufficiently small $\e$ such that $g (t, \cdot) = 0$ in $(S, S + \e ^2) \cup (T - \e ^2, T)$. Thus
\begin{align*}
    \|f - f _\e\| _{L ^1} = \|g + h - g _\e - h _\e\| _{L ^1} \le \|g - g _\e\| _{L ^1} + \|h\| _{L ^1} + \|h _\e\| _{L ^1} < \delta
\end{align*}
provided $\e$ is small enough.
\end{proof}

\subsection{Maximal Function}
The Existence Proposition \ref{prop:existence2} ensures the maximal function is well-defined almost everywhere. With the help of covering lemma, we can prove the bounds for the maximal function. A lot of ideas are borrowed from \cite{stein1970}. We do not claim any originality for results in this section, but only put them here for the sake of completeness.

\begin{proof}[Proof of Theorem \ref{thm:maximal-function}]
By the Existence Proposition \ref{prop:existence2}, for almost every $\tx \in (S, T) \times \Rd$, the set $\{\e > 0: \Qe \tx \text{ is $\eta$ admissible}\}$ is nonempty, so the maximal function $\mmq (f)$ is well-defined almost everywhere.
\begin{enumerate}[(1)]
    \item
        This is evident from the definition, since for any $\tx$ it holds that
        \begin{align*}
            \fint _{\Qe \tx} |f (s, y)| \d y \d s \le \|f\| _{L ^\infty}.
        \end{align*}
    \item 
    For any $\lambda > 0$, let $E _\lambda = \left\lbrace \tx: (\mmq f)\tx > \lambda \right\rbrace$ be the superlevel set. Then by definition, there is an $\eta$-admissible $\Qe$ centered at each point $\tx \in E _\lambda$, such that
    \begin{align*}
        |\Qe|< \frac1{\lambda} \int _{\Qe \tx} |f (s, y)| \d y \d s.
    \end{align*}
    Their radii are thus uniformly bounded. Thanks to the Covering Lemma Proposition \ref{prop:covering-lemma}, we can choose a pairwise disjoint subcollection $\{Q _{\e _j} (t ^j, x ^j)\}$, such that
    \begin{align*}
        \sum _j |Q _{\e _j}| \ge \frac1C \left|
            \bigcup \nolimits _{\tx \in E _\lambda} \Qe \tx
        \right|.
    \end{align*}
    Therefore the measure of the superlever set can be bounded by
    \begin{align*}
        |E _\lambda| \le C \sum _j |Q _{\e _j}| \le \frac{C}{\lambda} \sum _j \int _{Q _{\e _j} (t ^j, x ^j)} |f| \d x \d t \le \frac{C}{\lambda} \int _{(S, T) \times \Rd}  |f| \d x \d t.
    \end{align*}
    
    \item For the type $(q, q)$ part, we use Marcinkiewicz interpolation. Note that $\mmq$ is subadditive: $\mmq (f + g) \le \mmq (f) + \mmq (g)$.
    % \begin{align*}
    %      \mmq (f + g) \tx 
    %     & = \sup _\e \fint _{\Qe \tx} |f + g| \d x \d t \\
    %     & \le \sup _\e \fint _{Q _\e \tx} |f|\d x \d t 
    %     + \sup _\e \fint _{Q _\e \tx} |g|\d x \d t\\
    %     &= \mmq (f) \tx + \mmq (g) \tx.
    % \end{align*}
    We can split $f = f _1 + f _2$ where $f _1 = f \chi _{|f| \le \frac\lambda2}$ and $f _2 = f \chi _{|f| > \frac\lambda2}$. First, the strong type $(\infty, \infty)$ estimate applied to $f _1$ yields
    \begin{align*}
        \|\mmq (f _1)\| _{L ^\infty} \le \frac\lambda 2.
    \end{align*}
    Thus we have
    \begin{align*}
        \mmq (f) \le \mmq (f _1) + \mmq (f _2) \le  \frac\lambda2 + \mmq (f _2).
    \end{align*}
    So $\mmq (f) > \lambda$ implies $\mmq (f _2) > \frac\lambda2$. Next, the weak type $(1, 1)$ estimate applied to $f _2$ yields
    \begin{align*}
        \mu (E _\lambda) \le \mu \left(\left\lbrace
            \mmq (f _2) > \frac\lambda 2
        \right\rbrace\right) \le \frac {2C}{\lambda} \| f _2 \| _{L ^1}.
    \end{align*}
    By the layer cake representation, we have that
    \begin{align*}
        \int _{(S, T) \times \Rd} [\mmq (f)] ^q \d t \d x &= q \int _0 ^\infty \mu (E _\lambda) \lambda ^{q - 1} \d \lambda \\
        & \le 2 C q \int _0 ^\infty \frac1\lambda \int _{(S, T) \times \Rd} |f| \chi _{|f| > \frac\lambda2} \lambda ^{q - 1} \d x \d t \d \lambda \\
        & = 2 C q \int _{(S, T) \times \Rd} |f| \int _0 ^{2|f|} \lambda ^{q - 2} \d \lambda \d x \d t \\
        & = \frac{2 C q \cdot 2^{q - 1}}{q - 1} \int _{(S, T) \times \Rd} |f| ^q \d x \d t = C _q \| f \| _{L ^q} ^q.
    \end{align*}
    This finishes the proof of the theorem.
\end{enumerate}
\end{proof}

This theorem, together with the $L ^1$ convergence will imply the almost everywhere convergence of $f _\e$.

\begin{corollary}[a.e. Convergence]
\label{cor:pointwise-convergence}

Given $f \in L ^1 _{\loc} ((S, T) \times \Rd)$, for almost every $(t, x) \in (S, T) \times \Rd$, we have $f _\e (t, x) \to f (t, x)$ as $\e \to 0$, where $f _\e$ is defined in \eqref{eqn:def-fe}.
\end{corollary}

\begin{proof}
According to the Proposition \ref{prop:existence2}, $\diam (\Qe \tx) \to 0$ for almost every $\tx$, so we can assume $f$ is compactly supported and thus integrable without loss of generality.
By Corollary \ref{cor:L1-convergence} $L ^1$ convergence, we can find a subsequence which converges to $f$ almost everywhere, hence it suffices to show the following oscillation function is zero almost everywhere: for $f \in L ^1 _{\loc} ((S, T) \times \Rd)$, define the oscillation function by
\begin{align*}
    \Omega f \tx = \limsup _{\e \to 0} f _\e \tx - \liminf _{\e \to 0} f _\e \tx.
\end{align*}
For a uniformly continuous function $g$, we have $\Omega g \equiv 0$ almost everywhere, again using the fact that $\diam (\Qe (t, x)) \to 0$ by Proposition \ref{prop:existence2}. Moreover, notice that as $\e \to 0$, $\Qe \tx$ is $\eta$-admissible, so we have
\begin{align*}
    \limsup _{\e \to 0} f _\e \tx \le \limsup _{\e \to 0} |f _\e \tx| \le \mmq (f) \tx, \\
    - \liminf _{\e \to 0} f _\e \tx \le \limsup _{\e \to 0} |f _\e \tx| \le \mmq (f) \tx,
\end{align*}
so $\Omega f \le 2\mmq (f)$ almost everywhere. Now we fix $\lambda > 0$. For any given $\delta > 0$, we split $f = g + h$ with $g \in \cci ((S, T) \times \Rd)$ and $\| h \| _{L ^1} < \delta$, we have 
\begin{align*}
    \Omega f \le \Omega h + \Omega g = \Omega h \le 2 \mmq (h).
\end{align*}
By Theorem \ref{thm:maximal-function}, weak type $(1, 1)$ estimate gives
\begin{align*}
    \mu (\{ \Omega f > \lambda \}) \le \mu \left(\left\{ \mmq (h) > \frac\lambda2 \right\}\right) \le \frac{2C}{\lambda} \|h\| _{L ^1} = \frac{2C}{\lambda}\delta.
\end{align*}
Set $\delta \to 0$ we obtain
\begin{align*}
    \mu (\{ \Omega f > \lambda \}) = 0.
\end{align*}
This is true for any $\lambda > 0$, therefore we actually have
\begin{align*}
    \mu (\{ \Omega f > 0 \}) = 0.
\end{align*}
This means, for almost every $\tx \in (S, T) \times \Rd$, the oscillation is zero and
\begin{align*}
     \limsup _{\e \to 0} f _\e \tx = \liminf _{\e \to 0} f _\e \tx = \lim _{\e \to 0} f _\e \tx = f \tx.
\end{align*}
\end{proof}

Using the definition of $\mmq$, it is easy to deduce the following.
\begin{corollary}
For $f \in L ^1 _{\loc} ((S, T) \times \Rd)$, $f \le \mmq (f)$ almost everywhere.
\end{corollary}

To conclude this section, we present a slightly stronger result than the almost everywhere convergence.

\begin{theorem}[$\mathcal{Q}$-Lebesgue Differentiation Theorem]
    Under the same assumption of Corollary \ref{cor:pointwise-convergence}, for almost every $\tx \in (S, T) \times \Rd$, we have
    \begin{align}
        \label{Q-lebesgue-point}
        \lim _{\e \to 0} \fint _{\Qe \tx} |f (s, y) - f \tx| \d y \d s = 0.
    \end{align}
\end{theorem}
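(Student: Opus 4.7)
The plan is to deduce the $\mathcal Q$-Lebesgue differentiation statement from the a.e.\ convergence result of Corollary \ref{pointwise-convergence}, by the classical trick of applying that corollary to the functions $|f - r|$ for $r$ ranging over a countable dense subset of $\R$ and then using density.

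The first technicality is that $|f - r|$ is typically not globally in $L^1$ when $r \ne 0$, whereas Corollary \ref{pointwise-convergence} is stated for functions in $L^1(\R^{d+1})$. To sidestep this I would fix a countable exhaustion $(B_k)_{k \in \mathbb{N}}$ of $(S,T) \times \R^d$ by bounded open sets with $\overline{B_k} \subset B_{k+1}$, and for each pair $(r, k) \in \mathbb{Q} \times \mathbb{N}$ consider $g_{r,k} := |f - r| \chi_{B_{k+1}} \in L^1(\R^{d+1})$. Corollary \ref{pointwise-convergence} then yields a null set $N_{r,k}$ outside of which $\fint_{Q_\e(t,x)} g_{r,k} \d s \d y \to g_{r,k}(t,x)$ as $\e \to 0$.

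Setting $N := \bigcup_{r,k} N_{r,k}$, which is a countable union of null sets and hence null, I fix $(t,x) \notin N$ and pick $k$ with $(t,x) \in B_k$. By Proposition \ref{prop:existence2}, $\diam(Q_\e(t,x)) \to 0$, so for $\e$ small enough $Q_\e(t,x) \subset B_{k+1}$, which gives
\begin{align*}
    \lim_{\e \to 0} \fint_{Q_\e(t,x)} |f(s,y) - r| \d s \d y = |f(t,x) - r| \qquad \text{for every } r \in \mathbb{Q}.
\end{align*}
Given $\delta > 0$, choose $r \in \mathbb{Q}$ with $|f(t,x) - r| < \delta/2$; the triangle inequality under the average yields
\begin{align*}
    \fint_{Q_\e(t,x)} |f(s,y) - f(t,x)| \d s \d y
    \le \fint_{Q_\e(t,x)} |f(s,y) - r| \d s \d y + |r - f(t,x)|,
\end{align*}
whose $\limsup$ as $\e \to 0$ is at most $2|f(t,x) - r| < \delta$. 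Since $\delta$ was arbitrary, \eqref{Q-lebesgue-point} follows.

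I do not expect any deep obstacle here: the $L^1$-localization step is the only nontrivial technical point, and the rest is the textbook reduction of Lebesgue-differentiation-type statements to pointwise convergence of averages via a countable dense subset. The vanishing of the diameter from Proposition \ref{prop:existence2} is essential for making the localization step go through, since without it one could not be sure that $Q_\e(t,x)$ is eventually contained in any prescribed $B_{k+1}$.
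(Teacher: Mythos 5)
Your proof is correct and follows essentially the same strategy as the paper's: apply the a.e.\ convergence of Corollary \ref{pointwise-convergence} to $|f - r|$ over a countable dense set of $r$, intersect the good sets, and finish by density. The one place you go beyond the paper is the localization step: the paper simply observes $f - q \in L^1_{\mathrm{loc}}$ and invokes Corollary \ref{pointwise-convergence}, but that corollary (via Lemma \ref{lem:L1-boundedness}) is stated for functions in $L^1(\R^{d+1})$, and $|f-q|$ for $q \neq 0$ is not globally integrable. Your cutoff by $\chi_{B_{k+1}}$, combined with the fact from Proposition \ref{prop:existence2} that $\diam(Q_\e(t,x)) \to 0$ so that $Q_\e(t,x)$ is eventually contained in $B_{k+1}$, supplies exactly the missing reduction and makes the argument watertight; this is a genuine improvement in rigor over what is written in the paper, though it is not a different method.
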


If \eqref{Q-lebesgue-point} is true for $\tx$, we call it a \textbf{$\mathcal{Q}$-Lebesgue point of $f$}, and define \textbf{$\mathcal{Q}$-Lebesgue set of $f$} to be the set of all $\mathcal{Q}$-Lebesgue points of $f$.

\begin{proof}
Consider any rational number $q \in \mathbb{Q}$. Then $f - q \in L ^1 _{\mathrm{loc}}$, thus by Corollary \ref{cor:pointwise-convergence}, we have
\begin{align*}
    |f - q| _\e \tx = \fint _{\Qe \tx} |f - q| (s, y) \d y \d s \to |f - q| \tx, \qquad \text{a.e. as $\e \to 0$}.
\end{align*}
By taking a countable intersection over $q \in \mathbb{Q}$ of all the sets where the convergence $|f - q|_\e \to |f - q|$ happens, we have
\begin{align*}
    |f - q| _\e \tx \to |f - q| \tx, \qquad \text{a.e.  as $\e \to 0$ for all $q \in \mathbb{Q}$}.
\end{align*}
By the density of rational numbers, it holds that
\begin{align*}
    |f - r| _\e \tx \to |f - r| \tx, \qquad \text{a.e.  as $\e \to 0$ for all $r \in \R$}.
\end{align*}
In particular, letting $r = f (t, x)$ gives
\begin{align*}
    |f - f \tx| _\e \tx \to |f \tx - f \tx| = 0, \qquad \text{a.e. as $\e \to 0$}.
\end{align*}
This is equivalent to \eqref{Q-lebesgue-point}.
\end{proof}

\section{Application to the Navier-Stokes Equations}
\label{sec:navier-stokes}

In this section, we give an example of how to use the maximal function to bridge between the local study and global results. Here we provide an alternative proof for $L ^p$-weak integrability for higher derivatives of 3D Navier-Stokes equations. In Proposition 2.2 of \cite{Choi2014} (case $r = 0$), the authors obtained the following local theorem. Recall that $B _r$ represents a ball of radius $r$ in $\R ^3$.

\begin{proposition}[Choi \& Vasseur, 2014]
    \label{prop:choi}
    Let $\vp \in \cci (B _1)$ be radial, satisfying $0 \le \vp \le 1$, $\int \vp \d x = 1$, and $\vp \equiv 1$ on $B _\frac12$. There exists $\bar \eta > 0$, such that if $v, p \in C ^\infty ((-4, 0) \times \R ^3)$ is a solution to
    \begin{align*}
        \pt v + (v \cdot \grad) v + \grad p &= \La v, \qquad
        \div v = 0
    \end{align*}
    verifying both
    \begin{align*}
        &\int _{\R ^3} \vp (x) v (t, x) \d x = 0 \qquad \text{for almost every } t \in (-4, 0), \\
        &\int _{-4} ^0 \int _{B _2} \left(
            % |\mm (\mm (|\grad u|))| ^2
            |\mm (|\mm (\grad v)| ^q)| ^\frac2q
            + |\grad ^2 p|
            + \sum _{m = d} ^{d + 4} \sup _{\delta > 0} \left|
                (\grad ^{m - 1} h ^\alpha) _\delta 
                * \grad ^2 p
            \right|
        \right) \d x \d t \le \bar \eta
    \end{align*}
    for some integer $d \ge 1$, $\alpha \in [0, 2)$, $q = \frac{12}{\alpha + 6}$, and $(\grad ^m h ^\alpha) _\delta$ is defined by
    \begin{align*}
        h ^\alpha (x) &:= \frac{\vp \left(\frac x2\right) - \vp (x)}{|x| ^{3 + \alpha}}, \qquad
        (\grad ^m h ^\alpha) _\delta (x) := \frac1{\delta ^3} (\grad ^m h ^\alpha) \left( \frac x\delta \right),
    \end{align*}
    then
    \begin{align*}
        |(-\La) ^\frac\alpha2 \grad ^d v| \le C _{d, \alpha} \qquad \text{\upshape in } \left(-1/36, 0\right) \times B _\frac16 (0).
    \end{align*}
\end{proposition}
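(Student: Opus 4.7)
The plan is to implement a De Giorgi-type iteration for the $L^\infty$ bound, carried out simultaneously at levels $|\grad^m v|^2$ for $m = d, d+1, \ldots, d+4$, on a nested family of parabolic cylinders $Q_{r_n} = (-r_n^2, 0) \times B_{r_n}$ with $r_n \downarrow \frac16$. The role of the mean-zero hypothesis $\int \vp v \, \d x = 0$ is to substitute for a Galilean change of frame: combined with a weighted Poincar\'e inequality it yields $\|v\|_{L^2(B_2)} \lesssim \|\grad v\|_{L^2(B_2)}$, which is essential when differentiating the convective term $(v \cdot \grad)v$ since it converts the advection contribution into something controlled by gradients alone.

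First I would derive a localized energy inequality at each level $m$. Differentiating \eqref{eqn:navier-stokes} $m$ times, testing against $\grad^m v$ against a spatial cut-off $\chi^2$, and integrating by parts gives, schematically,
\begin{align*}
    \pt \int |\grad^m v|^2 \chi^2 + \int |\grad^{m+1} v|^2 \chi^2
    \le \sum_{j = 0}^m \int |\grad^j v| \, |\grad^{m - j + 1} v| \, |\grad^m v| \chi^2 + \int \grad^{m - 1} p \cdot \grad^{m + 1} v \, \chi^2.
\end{align*}
The trilinear convective terms are controlled by two applications of the Hardy--Littlewood maximal inequality: $v$ is absorbed into $\grad v$ using Poincar\'e and mean-zero, while iterated products of $|\grad v|$ are dominated pointwise by $\mm(\mm(|\grad v|)^q)^{2/q}$ through H\"older, with $q = \frac{12}{\alpha + 6}$ chosen precisely so that the scaling matches $d + \alpha$ derivatives in three dimensions.

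The main obstacle will be closing the pressure estimate, since $\grad^m p$ for large $m$ cannot be bounded by $\grad^2 p$ alone in a local setting. This is precisely why the hypothesis contains the supremum over $\delta$ of the convolutions $(\grad^{m-1} h^\alpha)_\delta \ast \grad^2 p$. One localizes $\grad^m p$ by writing it as $\grad^{m-2}(\vp \cdot \grad^2 p)$, bounded directly by the $\grad^2 p$ assumption, plus a tail obtained from the Newtonian representation of the pressure. The tail is rewritten via dyadic telescoping as an average of convolutions of $\grad^2 p$ against kernels $(\grad^{m-1} h^\alpha)_\delta$ at all scales $\delta$, which is exactly what the supremum-in-$\delta$ hypothesis absorbs.

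With the energy inequalities in hand, a De Giorgi iteration applied to the truncations $(|\grad^d v|^2 - k_n^2)_+$ on the shrinking cylinders $Q_{r_n}$ yields a super-linear recursion whose nonlinear gain factor is controlled by $\bar\eta$. Taking $\bar\eta$ small forces the truncated energy to vanish at some level $k_* \le C_{d, \alpha}$, establishing $|\grad^d v| \le C_{d, \alpha}$ on $(-\frac1{36}, 0) \times B(\frac16)$. To upgrade this to the bound on $(-\La)^{\alpha/2} \grad^d v$, I would interpolate this $L^\infty$ estimate against the $L^2$ bound on $\grad^{d+4} v$ delivered by the energy inequality at the highest level $m = d + 4$, using a Bernstein-type inequality localized by $\chi$; the four-derivative gap accommodates any fractional order $\alpha \in [0, 2)$.
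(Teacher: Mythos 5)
The paper does not prove this proposition at all: it is quoted verbatim as Proposition~2.2 (case $r=0$) of Choi and Vasseur \cite{Choi2014}, and the paper immediately moves on to rescaling it (Corollary~\ref{cor:local}) and applying the maximal-function machinery. So there is no in-paper argument to check your attempt against, and any comparison has to be with the original Choi--Vasseur proof.

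With that caveat, your sketch is broadly in the right spirit --- the original result does rest on De Giorgi-type local energy estimates at higher derivative levels, the mean-zero condition does play the role of a Galilean normalization to invoke Poincar\'e, and the dyadic/telescoping localization of the pressure tail you describe is exactly why the hypothesis carries the $\sup_\delta \left| (\grad^{m-1}h^\alpha)_\delta * \grad^2 p \right|$ terms. However, two parts of your proposal look off relative to the structure the statement is telegraphing. First, the kernel $h^\alpha(x) = \bigl(\vp(x/2) - \vp(x)\bigr)/|x|^{3+\alpha}$ is precisely a dyadically truncated kernel for $(-\La)^{\alpha/2}$; the natural reading of the hypothesis is that $(-\La)^{\alpha/2}\grad^d v$ is produced directly by a Littlewood--Paley-type dyadic sum of convolutions with $(h^\alpha)_\delta$, not by interpolating an $L^\infty$ bound on $\grad^d v$ against an $L^2$ bound on $\grad^{d+4} v$. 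Your ``Bernstein-type interpolation'' step, while not implausible after a Sobolev embedding to upgrade $L^2_x(\grad^{d+4}v)$ to $L^\infty_x(\grad^{d+2}v)$, leaves out the localization issues and does not explain why the five-kernel range $m = d, \dots, d+4$ is what appears in the hypothesis. Second, ``carrying De Giorgi simultaneously at levels $m = d, \dots, d+4$'' misreads the role of that range: the higher $m$'s enter through the pressure and kernel estimates needed to close at a single level, not as five parallel truncation iterations. Finally, the assertion that $q = \frac{12}{\alpha+6}$ is ``chosen precisely so that the scaling matches $d+\alpha$ derivatives'' is not an argument --- note that $|\mm(\mm(|\grad v|)^q)|^{2/q}$ has the dimension of $|\grad v|^2$ for every $q$, so the value of $q$ is forced by an integrability/H\"older-exponent count inside the energy estimate, which you would need to actually carry out.
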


This local theorem aims to control the magnitude of the higher fractional derivatives using quantities involving $\grad v$ and $\grad ^2 p$. Indeed, $\iint |\grad v| ^2 \d x \d t$ and $\iint |\grad ^2 p| \d x \d t$ both have the best scaling of the equation, and it is not hard to see the integrand in Proposition \ref{prop:choi} has the same scaling. The average zero condition ensures that the velocity $v$ is small as well, so that the quadratic flux term $v \cdot \grad v$ is manageable in the parabolic regularization. Moreover, since the purpose is to control a nonlocal quantity $(-\La) ^\frac\alpha2 \grad ^d v$, we need to gather nonlocal information using the maximal function $\mm$ and the $\grad ^m h ^\alpha$-maximal function: $\sup _{\delta > 0} |(\grad ^m h ^\alpha) _\delta * \cdot|$.

Let $u$ be a smooth solution to the Navier-Stokes equations \eqref{eqn:navier-stokes} in $(0, T)$. Since we need to center the cylinders at the terminal time for the local study, let us change our notation, and redefine
\begin{align*}
    \Qe \tx := \left\lbrace
        (s, y) : t - \e ^2 < s < t, |y - \Xe (t, x; s)| < \e
    \right\rbrace
\end{align*}
based on the velocity field $u$, which has $L ^2$ gradient and divergence zero. Results for the covering lemma and the maximal function can all be applied to this family of skewed cylinders, as we are just re-centering. By Galilean transform, the previous local proposition implies the following in the global coordinates.

\begin{corollary}
\label{cor:local}
There exists $\bar \eta > 0$, such that if $u, P \in C ^\infty ((0, T) \times \R ^3)$ is a solution to \eqref{eqn:navier-stokes} verifying for some $(t, x) \in (0, T) \times \R ^3$, $\e < \frac12\sqrt t$,
\begin{align*}
    &\frac1\e \int _{Q _{2 \e} \tx} \left(
        % |\mm (\mm (|\grad u|))| ^2
        |\mm (|\mmu| ^q)| ^\frac2q
        + |\grad ^2 P|
        + \sum _{m = d} ^{d + 4} \sup _{\delta > 0} \left|
            (\grad ^{m - 1} h ^\alpha) _\delta 
            * \grad ^2 P
        \right|
    \right) \d x \d t \le \bar \eta,
\end{align*}
then
\begin{align*}
    |(-\La) ^\frac\alpha2 \grad ^d u| \tx \le \frac{C _{d, \alpha}}{\e ^{d + \alpha + 1}}.
\end{align*}
\end{corollary}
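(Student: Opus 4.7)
The plan is to reduce Corollary \ref{cor:local} to the local proposition of Choi--Vasseur via a parabolic rescaling combined with a time-dependent Galilean transform that straightens the skewed cylinder $\Qe \tx$ into the standard cylinder $(-4, 0) \times B _2$. Concretely, I would introduce coordinates $\sigma = s - t$ and $z = y - \Xe(t, x; s)$ adapted to the cylinder, then rescale $\tilde \sigma = 4\sigma/\e ^2$, $\tilde z = 2z/\e$. The transformed velocity and pressure are
\begin{align*}
    v(\tilde \sigma, \tilde z) &= \tfrac{\e}{2}\bigl[u(s, y) - \ue(s, \Xe(t, x; s))\bigr], \\
    \tilde P(\tilde \sigma, \tilde z) &= \tfrac{\e ^2}{4}\bigl[P(s, y) + A(s) \cdot y\bigr],
\end{align*}
where $A(s) = \frac{d}{ds} \ue(s, \Xe(t, x; s))$ is the acceleration along the mollified trajectory. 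The linear-in-$y$ correction in $\tilde P$ absorbs the non-inertial frame acceleration, so by Galilean invariance $(v, \tilde P)$ satisfies Navier--Stokes and is divergence free on $(-4, 0) \times B _2$; moreover $\grad ^2 \tilde P$ equals $\grad ^2 P$ up to scaling, unaffected by the boost.

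Next I would verify the mean-zero hypothesis $\int \vp(\tilde z) v(\tilde \sigma, \tilde z) \d \tilde z = 0$ of the local proposition. Unwinding the change of variables, this is exactly the identity $(\vp _{\e/2} \ast u)(s, \Xe(t, x; s)) = \ue(s, \Xe(t, x; s))$, which holds provided the $\vp$ appearing in the local proposition is chosen compatibly with our mollifier after absorbing the factor of $2$ between $B _2$ and $B _1$. This is a normalization choice available when invoking the local theorem.

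Then I would transfer the integral hypothesis. Under the rescaling, $\d x \d t = (\e/2) ^5 \d \tilde z \d \tilde \sigma$, so the prefactor $\e ^{-1}$ in the corollary matches a universal constant times $\int _{(-4, 0) \times B _2}$. The integrand $|\mm(|\mm(|\grad u|)| ^q)| ^{2/q} + |\grad ^2 P| + \sum _m \sup _\delta |(\grad ^{m-1} h ^\alpha) _\delta \ast \grad ^2 P|$ transforms under the rescaling with well-defined scaling weights; the choice $q = 12/(\alpha + 6)$ is precisely the exponent that puts all these terms at the same scaling as $|\grad ^2 P|$. The Galilean boost affects none of them, because it is spatially constant (preserving $|\grad u|$ and hence the maximal functions) and only adds a linear-in-$y$ term to $P$ (annihilated by $\grad ^2$). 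Thus, after adjusting $\bar \eta$ by a universal factor, the hypothesis of the local proposition holds for $(v, \tilde P)$.

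Finally, applying the local proposition gives $|(-\La _{\tilde z}) ^{\alpha/2} \grad _{\tilde z} ^d v|(0, 0) \le C _{d, \alpha}$. Undoing the rescaling, $\grad _{\tilde z} = (\e/2) \grad$ and $(-\La _{\tilde z}) ^{\alpha/2} = (\e/2) ^\alpha (-\La) ^{\alpha/2}$, while $v$ differs from $(\e/2) u$ by a spatially constant shift, so $(\e/2) ^{1 + d + \alpha} |(-\La) ^{\alpha/2} \grad ^d u|(t, x) \le C _{d, \alpha}$, yielding the claimed bound. The main obstacle is the bookkeeping in the third step: verifying the scaling of each term in the integrand, especially the convolutions with the scaling-critical kernels $(\grad ^{m-1} h ^\alpha) _\delta$ (where $\delta$ is scale-free and must be handled by the $\sup _\delta$), and confirming that the Galilean boost passes through these convolutions cleanly because it is killed by two spatial derivatives.
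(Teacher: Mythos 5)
Your proposal takes the same route as the paper: the paper's proof consists of exactly the change of variables you describe, a parabolic rescaling by $\e$ combined with the time-dependent Galilean boost along the mollified trajectory $\Xe(t,x;\cdot)$, with the linear-in-space correction $\e y\,\partial_s[\ue(\cdot,\Xe)]$ added to the pressure to absorb the non-inertial acceleration. You are in fact somewhat more careful than the terse proof in the paper about the factor-of-two normalization between the $B_1$ support of $\vp$ (in the mean-zero hypothesis) and the $B_2$ cylinder of the local proposition, and about why the Galilean boost passes through the maximal functions and the scaling-critical kernels---these are exactly the details the paper leaves implicit.
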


\begin{proof}
For a fixed $\tx$, denote $r (s) = t + \e ^2 s$ and $z (s) = \Xe \tx[r]$, then 
\begin{align*}
    \dot r = \e ^2, \qquad \dot z = \e ^2 u _\e (r, z).
\end{align*}
We define the following change of coordinates:
\begin{align*}
    v (s, y) &= \e u (r, z + \e y) - \e u _\e (r, z), \\
    p (s, y) &= \e ^2 P (r, z + \e y) + \e y \partial _s [u _\e (r, z)].
\end{align*}
% \begin{align*}
%     v (s, y) &= \e u (t + \e ^2 s, \Xe (t, x; t + \e ^2 s) + \e y) \\
%     &\qquad - \e (u * \phi _\e) (t + \e ^2 s, \Xe (t, x; t + \e ^2 s)), \\
%     p (s, y) &= \e ^2 P (t + \e ^2 s, \Xe (t, x; t + \e ^2 s) + \e y) \\
%     &\qquad + \e y \partial _s [
%         (u * \phi _\e) (t + \e ^2 s, \Xe (t, x; t + \e ^2 s))
%     ].
% \end{align*}
Then we have the following in the new variables 
\begin{align*}
    \partial _s v &= \e \dot r \partial _t u + \e \dot z \cdot \grad u - \e \dot r \partial _t u _\e - \e \dot z \cdot \grad u _\e \\
    &= \e ^3 \left( 
        \partial _t u + u _\e \cdot \grad u - \partial _t u _\e - u _\e \cdot \grad u _\e
    \right), \\
    v \cdot \grad _y v &= v \cdot \e ^2 \grad u = \e ^3 \left( 
        u \cdot \grad u - u _\e \cdot \grad u
    \right), \\
    \La _y v &= \e ^3 \La u.
\end{align*}
Combining these three, we obtain 
\begin{align*}
    \partial _s v + v \cdot \grad v - \La v &= \e ^3 \left(
        \partial _t u + u \cdot \grad u - \La u - (\partial _t + u _\e \cdot \grad) u _\e
    \right) \\
    &= -\e ^3 (\grad P + (\partial _t + u _\e \cdot \grad) u _\e).
\end{align*}
Moreover, since $\partial _s u _\e (r, z) = \dot r u _\e + \dot z \cdot \grad u _\e = \e ^2 (\partial _t + u _\e \cdot \grad) u _\e$, we have
\begin{align*}
    \grad _y p = \e ^2 \e \grad P + \e \partial _s (u _\e (r, z)) = \e ^3 \left(
        \grad P + (\partial _t + u _\e \cdot \grad) u _\e
    \right).
\end{align*}
Therefore, $(v, p)$ is also a solution to the Navier-Stokes equations. Now we check that $v, p$ satisfy the assumptions of the Proposition \ref{prop:choi}. First, since $t \in (4 \e ^2, T)$, we know that $(v, p)$ is a smooth solution for $s \in (-4, 0)$. Next, we can verify that
\begin{align*}
    \int _{\R ^3} \vp (y) v (s, y) \d y = \e \left( 
        \int _{\R ^3} \vp (y) u (r, z + \e y) \d y - u _\e (r, z)
    \right) = 0.
\end{align*}
For the last condition, the change of variable yields
\begin{align*}
    |\mm (| \mm (\grad v)| ^q)| ^\frac2q &= \e ^{4} |\mm (| \mm (\grad u)| ^q)| ^\frac2q, \\
    \grad ^2 p &= \e ^2 \cdot \e ^2 \grad ^2 P + 0 = \e ^4 \grad ^2 P, \\
    (\grad ^{m - 1} h ^\alpha) _\delta * \grad ^2 p &= \e ^4 (\grad ^{m - 1} h ^\alpha) _{\e \delta} * \grad ^2 P.
\end{align*}
Since $Q _{2 \e} (t, x) = \{ (r, z + \e y): (s, y) \in (-4, 0) \times B _2 \}$ has space-time dimension 5, the last condition of Proposition \ref{prop:choi} is verified. As a consequence, we can bound
\begin{align*}
    |(-\La) ^\frac\alpha2 \grad ^d v (s, y)| \le C _{d, \alpha} \qquad \text{ in } (-1/36, 0) \times B _{\frac16} (0).
\end{align*}
In particular, when $s = 0, y = 0$, we have 
\begin{align*}
    C _{d, \alpha} \ge |(-\La) ^\frac\alpha2 \grad ^d v (0, 0)| = \e ^{d + \alpha + 1} |(-\La) ^\frac\alpha2 \grad ^d u (t, x)|.
\end{align*}
\end{proof}

Based on this, let us prove Theorem \ref{thm:vasseur} using the maximal function $\mmq$.

\begin{proof}[Proof of Theorem \ref{thm:vasseur}]
Define
\begin{align*}
    F (t, x) = |\mm (|\mmu| ^q)| ^\frac2q (t, x)
    + |\grad ^2 P|
    + \sum _{m = d} ^{d + 4} \sup _{\delta > 0} \left|
        (\grad ^{m - 1} h ^\alpha) _\delta 
        * \grad ^2 P
    \right| (t, x).
\end{align*}
It is well-known that for the Navier-Stokes equations, smooth solutions satisfy the following energy inequality:
% \begin{align*}
\[
    \|\grad u\| ^2 _{L ^2 ((0, T) \times \R ^3)} \le \frac12 \|u _0\| ^2 _{L ^2 (\R ^3)}.
\]
By the boundedness of the spatial maximal function in $L ^2 (\R ^3)$ and in $L ^\frac2q (\R ^3)$, we have 
\[
    \| |\mm (|\mmu| ^q)| ^\frac2q \| _{L ^1} \le \| \grad u \| _{L ^2} ^2.
\]
Moreover, using $-\La P = \div (u \cdot \grad u) = \grad u _i \cdot \partial _{x _i} u$, by the compensated compactness (\cite{Coifman1993}), we bound
\[ 
    \|\grad ^2 P\| _{L ^1 (0, T; \mathcal H ^1 (\R ^3))} \le \|\grad u\| _{L ^2 ((0, T) \times \R ^3)} ^2.
\]
where $\mathcal H ^1$ is the Hardy space. It is continuously embedded in $L ^1$, and we can use the Hardy norm to bound the $\grad ^m h ^\alpha$-maximal function by
\[ 
    \left\| 
        \sup _{\delta > 0} \left|
        (\grad ^{m - 1} h ^\alpha) _\delta 
        * \grad ^2 P (t)
        \right| 
    \right\| _{L ^1 (\R ^3)} \le C _{m, \alpha} \|\grad ^2 P (t)\| _{\mathcal H ^1 (\R ^3)}.
\] 
Combining the above estimates, we conclude that
\begin{align}
    \label{eqn:F-L1}
    \| F \| _{L ^1 ((0, T) \times \R ^3)} \le C \|u _0\| _{L ^2 (\R ^3)} ^2.
\end{align}

Denote $\eta := \min \left\{\frac{\bar \eta}{|Q _1|}, (\eta _0) ^2 \right\}$, and for $(t, x) \in (0, T) \times \R ^3$ we define
\begin{align*}
    I (\e) &:= \e ^4 \fint _{\Qe \tx} F (s, y) \d y \d s = \frac1{\e|Q _1|} \int _{\Qe \tx} F (s, y) \d y \d s.
\end{align*}
For all the $\mathcal Q$-Lebesgue point $\tx$ of $F$, we claim that there exists a positive $\e = \etoxo$ such that one of the following two cases is true:
\begin{enumerate}[{\ttfamily C{a}se 1.}]
    \item $\etoxo < t ^{\frac12}$, and $I (\etoxo) = \eta$.
    \item $\etoxo = t ^{\frac12}$, and $I (\etoxo) \le \eta$.
\end{enumerate}
The reason is that $\lim _{\e \to 0} I (\e) = 0 ^4 F \tx = 0$,
% \begin{align*}
%     I (\e) = \e ^4 \fint _{\Qe \tx} F (s, y) \d y \d s \to 0 ^4 F \tx = 0,
% \end{align*}
and $I (\e)$ is clearly a continuous function of $\e$ when $\e > 0$. As $\e$ ranges from 0 to $t ^{\frac12}$, either $I (\e)$ reaches $\eta$ at some $\etoxo < t ^{\frac12}$ \texttt{(Case 1)}, or it remains smaller than $\eta$ until $\etoxo = t ^{\frac12}$ \texttt{(Case 2)}.

At this $\e = \etoxo$ level, because
\begin{align*}
    |\mmu| ^2 
    \le |\mm (|\mmu)| ^q)| ^\frac2q
    \le F,
\end{align*}
by Jensen we have
\begin{align*}
    \e ^4 \left(
        \fint _{\Qe \tx} |\mmu| \d y \d s
    \right) ^2 \le \e ^4 
        \fint _{\Qe \tx} |\mmu| ^2 \d y \d s
    % \e ^4 \int _{\Qe \tx} |\mm (|\grad u|)| ^2 \d x \d t 
    \le I (\e) \le \eta,
\end{align*}
which implies $\Qe \tx$ is actually $\sqrt \eta$-admissible. So when in \texttt{Case 1},
\begin{align*}
    \eta &= \e ^4 \fint _{\Qe \tx} F (s, y) \d y \d s 
    \le \e ^4 \mmq F \tx.
\end{align*}
Combining with \texttt{Case 2}, we conclude
\begin{align*}
    \etoxo ^{-4} \le \max \left\{
        t ^{-2}, \frac{\mmq F \tx}{\eta}
    \right\}.
\end{align*}
Moreover, because $|Q _1| \cdot I (\etoxo) \le \bar \eta$ in both cases, Corollary \ref{cor:local} claims that 
\begin{align*}
    |(-\La) ^\frac\alpha2 \grad ^d u| \tx &\le \frac{C _{d, \alpha}}{\etoxo ^{d + \alpha + 1}}, \\
    \Rightarrow f ^p \tx &\le C _{d, \alpha} \etoxo ^{-4} \le C _{d, \alpha} \max \left\{
        t ^{-2}, \frac{\mmq F \tx}{\eta}
    \right\}.
\end{align*}
Finally, because $\mmq$ is of weak type $(1, 1)$, $\| \mmq F \| _{L ^{1, \infty}} \le C \| F \| _{L ^1}$. Together with \eqref{eqn:F-L1} we complete the proof of the theorem.
\end{proof}

\bibliographystyle{alpha}
\bibliography{bib}
\end{document}